\providecommand\@dotsep{5}
\def\listtodoname{List of Todos}
\def\listoftodos{\@starttoc{tdo}\listtodoname}
\numberwithin{equation}{section}
\newtheorem{thm}{Theorem}[section]
\newtheorem{prop}[thm]{Proposition}
\newtheorem{lem}[thm]{Lemma}
\newtheorem{rem}{Remark}
\newtheorem{definition}{Definition}
\newcommand{\R}{\mathbb{R}}
\newcommand{\1}{\frac{1}{2}}
\newcommand{\2}{2^*_{\gamma,\mu}}
\newcommand{\diam}{\operatorname{diam}}
	\title[Bifurcation and multiplicity results for critical \dots]{Bifurcation and multiplicity results for critical Grushin-Choquard problems}
	\author[Suman Kanungo]{Suman Kanungo}
	\address[]{\newline\indent
		Department of Mathematics
		\newline\indent
		Indian Institute of Technology Bhilai
		\newline\indent
		491002, Durg, Chhattisgarh, India}
	\email{\href{sumankau@iitbhilai.ac.in}{sumankau@iitbhilai.ac.in}}
	\author[Pawan Kumar Mishra]{Pawan Kumar Mishra}
	\address[]{\newline\indent
		Department of Mathematics
		\newline\indent
		Indian Institute of Technology Bhilai
		\newline\indent
		491002, Durg, Chhattisgarh, India}
	\email{\href{pawan@iitbhilai.ac.in}{pawan@iitbhilai.ac.in}}
	\author[Giovanni Molica Bisci]{Giovanni Molica Bisci}
	\address[]{\newline\indent  
		Dipartimento di Promozione delle Scienze Umane
		\newline\indent e
		della Qualità della Vita
		\newline\indent
		Università Telematica San Raffaele Roma
		\newline\indent
		00166, Via di Val Cannuta 247, Roma, Italy}
	\email{\href{giovanni.molicabisci@uniroma5.it}{giovanni.molicabisci@uniroma5.it}}
	\keywords{Bifurcation \and Grushin Operator \and Choquard equations}
	\subjclass[2020]{35J70, 35H20, 35A15}
\begin{document}
	\begin{abstract}
		We consider the following nonlocal  Br\'ezis-Nirenberg type critical Choquard problem involving the Grushin operator
		\begin{equation*}
			\left\{
			\begin{aligned}
				-\Delta_\gamma & u =\lambda u + \left(\displaystyle\int_\Omega \frac{|u(w)|^{\2}}{d(z-w)^\mu}dw\right) |u|^{\2-2}u \quad &&\text{in} \ \Omega,\\
				u &= 0 \quad &&\text{on} \, \partial \Omega,
			\end{aligned}
			\right.
		\end{equation*}
		where $\Omega$ is an open bounded domain in $\mathbb{R}^N$, $N \geq 3$, with $\Omega \cap \{ x=0\} \neq \emptyset$, and $\lambda >0$ is a parameter. Here, $\Delta_\gamma$ represents the Grushin operator, defined as
		\[
		\Delta_\gamma u(z) = \Delta_x u(z) +(1+\gamma)^2 |x|^{2\gamma} \Delta_y u(z), \quad \gamma \geq 0,
		\]
		where $z=(x,y)\in \Omega \subset \R^m\times \R^n$, $m+n=N \geq 3$ and $\2= \frac{2N_\gamma-\mu}{N_\gamma-2}$ is the Sobolev critical exponent in the Hardy-Littlewood context with $N_\gamma= m+(1+\gamma)n$ is the homogeneous dimension associated to the Grushin operator and $0<\mu<N_\gamma$. The homogeneous norm related to the Grushin operator is denoted by
		$d(\cdot)$. In this article, we prove the existence of bifurcation from any eigenvalue $\lambda^*$ of $-\Delta_\gamma$ under Dirichlet boundary conditions. Furthermore, we show that in a suitable left neighborhood of $\lambda^*$, the number of nontrivial solutions to the problem is at least twice the multiplicity of $\lambda^*$.
		
	\end{abstract} 
	\maketitle
	\section{Introduction} \label{sec:1}

	Let $z \in \R^N$, we can write $z=(x,y)$, where $x\in \R^m$ and $y \in \R^n$ with $m$ and $n$ being two non-negative integers such that $N=m+n \geq 3$. Now, we consider the Grushin operator $\Delta_\gamma$ ($\gamma\geq 0$), introduced by Baouendi \cite{Baouendi} and later studied by Grushin \cite{Grushin}, defined by
	\begin{align*}
		\Delta_\gamma u(z) = \Delta_x u(z) + (1+\gamma)^2|x|^{2\gamma} \Delta_y u(z),
	\end{align*}
	where $\Delta_x$ and $\Delta_y$ are, respectively, the Laplace operators with respect to the variables $x$ and $y$, we observe that it reduces to the Laplace operator for $\gamma = 0$. The Grushin operator is a generalization of the degenerate differential operator \cite{Tricomi} $\frac{\partial^2}{\partial x^2} + x \frac{\partial^2}{\partial y^2}$. $\Delta_\gamma$ belongs to the class of subelliptic operators, intermediate between elliptic and hyperbolic operators (see \cite{Egorov}).
	One of the key properties of this operator is its lack of uniform ellipticity in $\mathbb{R}^N$, as it is degenerate on the subspace $\{0\} \times \mathbb{R}^n$. The homogeneous norm $d(\cdot)$ related to the operator is defined by
	\[  d(z) = ( |x|^{2(\gamma+1)}+ |y|^2)^{\frac{1}{2(\gamma+1)}} \]
	and the anisotropic dilations naturally associated with the Grushin operator are defined as
	\[
	\tau_t(x, y) = \left( t x, \, t^{\gamma+1} y \right) \quad \text{for all} \ (x, y) \in \mathbb{R}^m \times \mathbb{R}^n, \ t > 0.
	\]
	It is well known that the operator $\Delta_\gamma$ is homogeneous of degree two with respect to these dilations, while the norm $d(z)$ is homogeneous and of degree one. Moreover, there exists a constant $C > 0$, depending only on $\gamma$ and the homogeneous dimension $N_\gamma= m+(1+\gamma)n$, such that
	\[
	\Phi(z) = \frac{C}{d(z)^{N_\gamma - 2}}
	\]
	is the fundamental solution of $-\Delta_\gamma$ with a singularity at the origin (see \cite[Proposition 2.1]{Garofalo}). Throughout this paper we use the following notation, $B_r = B_r(0)$ is a ball of radius $r>0$, centered at the origin, respect to the homogeneous norm $d(z)$, i.e.,
	\[
	B_r = \left\{ z \in \mathbb{R}^N \, : \, d(z) < r \right\}.
	\]

	Motivated by the seminal work of Brezis and Nirenberg \cite{Brezis}, the following critical problem has been extensively studied:
	\begin{equation}
		\label{1.1}
		\left\{
		\begin{aligned}
			-\Delta &u = \lambda u + |u|^{2^* - 2} u \quad &&\text{in} \ \Omega,\\  u& = 0  &&\text{ on }  \partial \Omega,
		\end{aligned}
		\right.
	\end{equation}
	where $2^* = \frac{2N}{N-2}$ is the critical Sobolev exponent, $\Omega$ is an open bounded domain in $\R^N$ ($N \geq 3$), and $\lambda > 0$ is a positive parameter. The bifurcation and multiplicity of problem \eqref{1.1} were first studied by Cerami, Fortunato, and Struwe \cite{Cerami}. In their work, the authors proved that there is a bifurcation from any eigenvalue of $-\Delta_\gamma$ with Dirichlet boundary conditions. Moreover, they showed that in a suitable left neighborhood of any such eigenvalue, the number of solutions is at least twice the multiplicity of the corresponding eigenvalue. They also provided an explicit estimate for the size of this neighborhood, which depends on the best constant in the Sobolev inequality, the Lebesgue measure of the domain, and the dimension. Following this seminal contribution, similar results have been obtained for a variety of variational operators. For instance, Fiscella, Molica Bisci, and Servadei \cite{Fiscella} extended these ideas to the nonlocal framework by considering a general class of fractional operators. Their work generalizes the bifurcation and multiplicity results of Cerami, Fortunato, and Struwe \cite{Cerami} from classical elliptic equations to the setting of nonlocal fractional operators. We refer to \cite{Perera} for quasilinear results, where the authors extended the bifurcation and multiplicity theory for the critical fractional $p$-Laplacian from the semilinear case
	$p=2$ to all $p \in (1, \infty)$ by using a new abstract critical point theorem.
	
	On the other hand, Choquard-type problems have gained significant attention in recent years. The equation
	\begin{equation}
		\label{C}
		-\Delta u + V(x)u = \left( \int_{\R^N} \frac{|u(w)|^p}{|z-w|^\mu} \ dw \right) |u|^{p-2} u \quad \text{in} \ \R^N,
	\end{equation}
	with $N=3$, $\mu=1$, $p=2$ and $V$ as a continuous potential, was first introduced by
	Pekar \cite{Pekar} in 1954 in the context of quantum field theory to describe a polaron at rest. Later, P. Choquard \cite{Lieb} also studied \eqref{C} as an approximation to the Hartree-Fock theory for one component plasma, modeling an electron trapped in its hole. Since then, several authors have investigated Choquard-type problems with $N \geq 3, 0 < \mu < N, p \geq 2$ as seen in the works of \cite{Ackermann,Lions,Moroz,Moroz2,Moroz3} and the references therein. In particular, Gao and Yang \cite{Gao} established the existence of solutions for the Br\'ezis-Nirenberg problem \eqref{1.1} with Choquard-type nonlinearity
	\begin{equation*}
		\left\{
		\begin{aligned}
			-\Delta & u =\lambda u + \left(\displaystyle\int_\Omega \frac{|u(w)|^{2^*_\mu}}{|z-w|^\mu}dw\right) |u|^{2^*_\mu-2}u \quad &&\text{in} \ \Omega,\\
			u &= 0 \quad &&\text{on} \, \partial \Omega,
		\end{aligned}
		\right.
	\end{equation*}
	where $2^*_\mu$ is the critical exponent in the sense of the Hardy-Littlewood-Sobolev inequality. For the fractional Laplacian counterpart of this problem, similar results were obtained by Mukherjee and Sreenadh \cite{Mukherjee}.
	
	Now, coming to the Grushin operator $\Delta_\gamma$, we observe that it is contained in a family of operators of the form
	\[
	\Delta_\lambda := \sum_{i=1}^{N} \partial_{x_i} \left( \lambda_i^2 \partial_{x_i} \right), \quad \lambda = (\lambda_1, \dots, \lambda_N),
	\]
	which was introduced and studied by Franchi and Lanconelli \cite{Franchi} and later discussed by Kogoj and Lanconelli \cite{Kogoj}. Problems involving the Grushin operator have been widely studied in the last few years. For interested readers, we refer \cite{Holanda,Alves,Loiudice,Monti1,Monti2} along with the references therein. Recently, Molica Bisci, Malanchini, and Secchi \cite{Bisci} established a bifurcation and multiplicity results for the Br\'ezis-Nirenberg problem \eqref{1.1} involving the Grushin operator
	\begin{equation}
		\nonumber
		\left\{
		\begin{aligned}
			-\Delta_\gamma &u = \lambda u + |u|^{2^*_\gamma - 2} u \quad &&\text{in} \ \Omega,\\  u& = 0  &&\text{ on }  \partial \Omega,
		\end{aligned}
		\right.
	\end{equation}
	where $2^*_\gamma = \frac{2N_\gamma}{N_\gamma-2}$ is the associated Sobolev critical exponent. Their results extend those obtained in \cite{Cerami} for classical elliptic problems and in \cite{Fiscella} for nonlocal fractional operators. For related results concerning the
	$p$-Grushin operator, see \cite{Bisci2}.
	
	Motivated by the above results for classical elliptic operators, fractional operators, and the Grushin framework, our aim is to establish bifurcation and multiplicity results for the Br\'ezis-Nirenberg type Choquard problem involving the Grushin operator
	\begin{equation}
		\label{a}
		\left\{
		\begin{aligned}
			-\Delta_\gamma & u =\lambda u + \left(\int_\Omega \frac{|u(w)|^{\2}}{d(z-w)^\mu}dw\right) |u|^{\2-2}u \quad &&\text{in} \ \Omega,\\
			u &= 0 \quad &&\text{on} \, \partial \Omega,
		\end{aligned}
		\right.
	\end{equation}
	where $ \Omega $ is an open, bounded domain in $ \mathbb{R}^N $, $ N \geq 3 $, with $\Omega \cap \{ x=0\} \neq \emptyset$, $\lambda >0$ is a parameter and $0<\mu<N_\gamma$. Here, $\2= \frac{2N_\gamma-\mu}{N_\gamma-2}$ is the Sobolev critical exponent in the Grushin Hardy-Littlewood context.
	
	Now, we define the function space and variational setup for our problem.
	
	\subsection{Function space and variational formulation} 
	Consider the space
	\[  \mathcal{H}_\gamma(\R^N) := \left\{ u\in L^2(\R^N): \nabla_x u, |x|^\gamma \nabla_y u \in L^2(\R^N)\right\}   \]  endowed with the norm $\left( \int_{\R^N} (|\nabla_\gamma u|^2 + |u|^2) \ dz\right)^{\1}$, 
	where the Grushin gradient $\nabla_\gamma $ is defined as
	\begin{align*}
		\nabla_\gamma u(z) := \left( \nabla_x u(z), (1+\gamma)|x|^\gamma \nabla_y u(z) \right).
	\end{align*} 
	For a bounded set $\Omega \subset \R^N$, we define the space $\mathcal{H}_\gamma(\Omega)$ as the completion of $C_c^\infty(\Omega)$ with respect to the norm $\|u\|_{\gamma}: = \left( \int_{\Omega} |\nabla_\gamma u|^2 dz\right)^{\1}$. 
	The space $\mathcal{H}_\gamma(\Omega)$ is a Hilbert space with the inner product $$\langle u,v\rangle_\gamma := \int_\Omega \nabla_\gamma u \nabla_\gamma v \ dz.$$

	From \cite{Holanda,Monti1}, the space $\mathcal{H}_\gamma(\mathbb{R}^N)$ is continuously embedded into Lebesgue spaces, namely,
	\[
	\mathcal{H}_\gamma(\mathbb{R}^N) \hookrightarrow L^p(\mathbb{R}^N),
	\quad p \in [2,2_\gamma^*].
	\]
	For a bounded domain $\Omega \subset \mathbb{R}^N$ with $\Omega \cap \{ x=0\} \neq \emptyset$, the space $\mathcal{H}_\gamma(\Omega)$ is continuously embedded into $L^p$ spaces, i.e.,
	\[
	\mathcal{H}_\gamma(\Omega) \hookrightarrow L^p(\Omega), 
	\quad p \in [1,2_\gamma^*],
	\]
	and the embedding is compact for $p \in [1,2_\gamma^*),$
	see \cite{Holanda,Kogoj, Loiudice1}.
	
	\begin{definition}
		We say that a function $u \in \mathcal{H}_\gamma(\Omega)$ to be a (weak) solution of \eqref{a} if
		\[ \int_\Omega \nabla_\gamma u \nabla_\gamma \phi \ dz = \lambda \int_\Omega u \phi \ dz +\int_\Omega \left( \int_\Omega \frac{|u(w)|^{\2}}{d(z-w)^{\mu}}dw \right) |u|^{\2-2}u\phi \ dz, \]
	\end{definition}
	for all $\phi \in \mathcal{H}_\gamma(\Omega)$. The energy functional $J_\gamma: \mathcal{H}_\gamma(\Omega) \rightarrow \R$ associated to problem \eqref{a} is given by
	\begin{align*}  J_\gamma(u) &= \frac{1}{2} \int_\Omega |\nabla_\gamma u|^2dz - \frac{\lambda}{2}\int_\Omega |u|^2dz \\
		& \quad-\frac{1}{2 \cdot \2} \int_\Omega \left( \int_\Omega \frac{|u(w)|^{\2}}{d(z-w)^{\mu}}dw \right) |u(z)|^{\2} dz .\end{align*}

	\subsection{Spectral properties of the Grushin operator}
	We denote $0<\lambda_1 \leq \lambda_2 \leq \lambda_3 \leq \cdots \leq \lambda_j \leq \cdots $ the sequence of eigenvalues of the problem
	\begin{align*}
		-\Delta_\gamma u = \lambda u \ \text{ in } \ \Omega, \ u=0 \ \text{ on } \ \partial \Omega
	\end{align*} with $\lambda_j \rightarrow \infty$ as $j \rightarrow \infty$.
	The eigenfunctions $\{ \phi_j\}_{j \geq 1}$ corresponding to each $\lambda_j$ forms an orthonormal basis for $L^2(\Omega)$ and an orthogonal basis for $\mathcal{H}_\gamma(\Omega)$ (see \cite{Xu}). The characterization of the first eigenvalue $\lambda_1$ is given by
	\begin{equation*}
		\lambda_1 = \min_{u\in \mathcal{H}_\gamma(\Omega)\setminus \{0\}} \frac{\int_\Omega|\nabla_\gamma u |^2 dz}{\int_\Omega |u|^2 dz }.
	\end{equation*}
	Furthermore, for any $j\in \mathbb{N}$, the eigenvalues can be characterized as follows:
	\begin{equation*}
		\lambda_{j+1} = \min_{u\in \mathcal{H}_j^{\perp}(\Omega)\setminus \{0\}} \frac{\int_\Omega|\nabla_\gamma u |^2 dz}{\int_\Omega |u|^2 dz },
	\end{equation*}
	where $\mathcal{H}_j^{\perp}(\Omega) = \{ u \in \mathcal{H}_\gamma(\Omega): \langle u, \phi_i \rangle_\gamma, \text{ for } i= 1, 2, \cdots, j \}$. We denote $\mathcal{H}_{\gamma,j}(\Omega)= span\{\phi_1, \phi_2, \cdots, \phi_j\},$ then $\mathcal{H}_{\gamma,j}(\Omega) \bigoplus \mathcal{H}_j^{\perp}(\Omega)=\mathcal{H}_\gamma(\Omega)$ and $\mathcal{H}_{\gamma,j}(\Omega)$ is finite dimensional.
	
	\subsection{Main result} To prove the multiplicity of solutions for \eqref{a}, we use the abstract critical point theorem due to P. Bartolo, V. Benci, and D. Fortunato \cite{Bartolo}.
	\begin{thm}
		\label{thm1}
		Let $I : H \rightarrow \R$ be a $C^1$ functional on a Hilbert space $(H, \|\cdot\|)$ satisfying the following conditions:
		\begin{enumerate}
			\item[(i)] $I(u)=I(-u)$ and $I(0)=0$;
			\item[(ii)] there exists a positive constant $\beta$ such that $I$ satisfies the Palais-Smale condition in $(0, \beta)$;
			\item[(iii)] there exist two closed subspaces $V$ and $W$ of $H$ and three positive constants $\rho, \delta, \beta'$ with $\delta<\beta'<\beta$ such that
			\begin{enumerate}
				\item[(a)] $I(u)\leq \beta'$ for all $u\in W$,
				\item[(b)] $I(u) \geq \delta$ for any $u\in V$ with $\|u\|=\rho$;
				\item[(c)] codim $V< \infty$ and dim $W \geq codim V$.
			\end{enumerate}
		\end{enumerate}
		Then, there exist at least $dim W-codim V$ pairs of critical points of $I$, with critical values belonging to the interval $[\delta, \beta']$.
	\end{thm}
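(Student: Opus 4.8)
The plan is to establish the conclusion through an equivariant ($\mathbb{Z}_2$-symmetric) minimax scheme built on the Krasnoselskii genus together with Benci's pseudo-index, extracting critical values by means of a quantitative deformation lemma whose hypotheses are supplied by the restricted Palais--Smale condition (ii). Set $m:=\dim W-\operatorname{codim} V$; by (c) we have $m\ge 0$, and if $m=0$ there is nothing to prove, so assume $m\ge 1$. For a closed symmetric set $A\subset H\setminus\{0\}$ (that is, $A=-A$ and $0\notin A$) write $\operatorname{gen}(A)$ for its genus, and recall its monotonicity, subadditivity and continuity, together with the intersection property: if $P\colon H\to H$ is an odd continuous map whose range lies in a subspace of dimension $k$ and $\operatorname{gen}(A)>k$, then there is $u\in A$ with $Pu=0$.

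Let $S_\rho:=\{u\in H:\norm{u}=\rho\}$ and let $\mathcal{M}$ be an appropriate class of odd homeomorphisms of $H$ (those admissible for the deformation lemma, isotopic to the identity and leaving $\{I\le 0\}$ invariant, so that $\mathrm{id}\in\mathcal{M}$). Following Benci, I would define the pseudo-index of a closed symmetric $A\subset H\setminus\{0\}$ relative to $S_\rho\cap V$ by
\[
 i^*(A):=\min_{h\in\mathcal{M}}\operatorname{gen}\bigl(h(A)\cap S_\rho\cap V\bigr),
\]
and set, for $j=1,\dots,m$,
\[
 c_j:=\inf_{\,i^*(A)\ge j}\ \sup_{u\in A} I(u).
\]
The heart of the construction is the two-sided estimate $\delta\le c_1\le\cdots\le c_m\le\beta'$. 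The lower bound comes from hypothesis (b): since $\mathrm{id}\in\mathcal{M}$, any $A$ with $i^*(A)\ge j\ge1$ satisfies $A\cap S_\rho\cap V\neq\emptyset$, whence $\sup_A I\ge\inf_{S_\rho\cap V}I\ge\delta$. The upper bound comes from (a) and (c): using the linking between $V$ and $W$ one shows that a suitable symmetric subset $\Sigma\subset W$ (a large sphere in $W$, suitably capped) satisfies $i^*(\Sigma)\ge m$, and since $\Sigma\subset W$ hypothesis (a) gives $c_m\le\sup_\Sigma I\le\beta'$. Thus every $c_j$ lies in $[\delta,\beta']\subset(0,\beta)$.

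Next I would show each $c_j$ is a critical value and count multiplicities. Because $c_j\in(0,\beta)$, condition (ii) provides the Palais--Smale condition at level $c_j$; since $I$ is only $C^1$, I would construct an odd pseudo-gradient vector field and integrate it to obtain, for any regular candidate level $c$, an odd deformation $\eta\in\mathcal{M}$ and $\epsilon>0$ with $\eta(\{I\le c+\epsilon\})\subset\{I\le c-\epsilon\}$ away from the critical set. The pseudo-index is nondecreasing under such $\eta$, so if $c_j$ were regular one could deform a nearly optimal $A$ (with $\sup_A I<c_j+\epsilon$ and $i^*(A)\ge j$) to $\eta(A)$, which still has $i^*(\eta(A))\ge j$ but $\sup_{\eta(A)}I\le c_j-\epsilon$, contradicting the definition of $c_j$. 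Hence each $c_j$ is critical. A standard genus argument handles coincidences: if $c_j=\cdots=c_{j+p}=:c$, then the symmetric critical set $K_c$ (compact by PS) satisfies $\operatorname{gen}(K_c)\ge p+1$ via subadditivity and a deformation away from a small symmetric neighborhood of $K_c$, so $K_c$ is infinite; in every case one collects at least $m=\dim W-\operatorname{codim} V$ distinct pairs $\{u,-u\}$ of critical points with critical values in $[\delta,\beta']$.

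The main obstacle is the interplay between the \emph{restricted} Palais--Smale condition and the minimax levels: compactness is available only on $(0,\beta)$, so the two-sided bound $\delta\le c_j\le\beta'$ is not a convenience but a prerequisite for invoking (ii), and it is precisely here that (a), (b) and the dimension/codimension linking in (c) must be orchestrated through the pseudo-index. A second delicate point, stemming from the mere $C^1$ regularity of $I$, is that the deformation must be generated by an \emph{odd} pseudo-gradient flow which simultaneously respects the $\mathbb{Z}_2$-symmetry, keeps the relevant sublevel sets invariant, and leaves the pseudo-index nondecreasing; verifying these compatibilities, together with the linking estimate $i^*(\Sigma)\ge m$ for the competitor $\Sigma\subset W$, constitutes the technical core of the argument.
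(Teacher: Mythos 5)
The paper itself contains no proof of Theorem \ref{thm1}: this is the abstract critical point theorem of Bartolo, Benci and Fortunato, quoted from \cite{Bartolo} and used as a black box in Section \ref{sec3} to prove Theorem \ref{m}. So the only meaningful comparison is with the proof in that source, and your proposal is in substance a reconstruction of it: Benci's pseudo-index $i^*$ built from the Krasnoselskii genus relative to $S_\rho\cap V$, the minimax levels $c_j$, the two-sided bound $\delta\le c_j\le\beta'$ extracted from (iii)(a)--(c), odd deformations legitimized by the restricted Palais--Smale condition (ii), and the estimate $\operatorname{gen}(K_c)\ge p+1$ when levels coincide. The skeleton and the order of quantifiers are correct.

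That said, your sketch defers exactly the step where the proof lives, and the version of that step you do commit to would fail. The admissible class $\mathcal{M}$ as you describe it (odd homeomorphisms ``isotopic to the identity and leaving $\{I\le 0\}$ invariant'') is too weak for the crucial estimate $i^*(\Sigma)\ge m$. Hypothesis (iii)(a) does not force $I\le 0$ far out on $W$ (the functional may be positive on all of $W\setminus\{0\}$, merely bounded by $\beta'$), and $H$ is infinite dimensional, so an odd homeomorphism preserving $\{I\le 0\}$ --- even fixing it pointwise --- can carry an unbounded portion of $W$ into the ball $B_\rho$. Then $O=\{u\in W:\norm{h(u)}<\rho\}$ is an unbounded symmetric neighborhood of $0$ in $W$, and the Borsuk--Ulam count $\operatorname{gen}(\partial_W O)=\dim W$ breaks down (the slab $\{|x_1|<1\}$ has boundary of genus $1$, not $\dim W$). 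What makes the intersection lemma true is a uniform displacement bound $\sup_u\norm{h(u)-u}<\infty$ imposed on every $h\in\mathcal{M}$ --- harmless, since truncated time-one pseudo-gradient flows move points by at most a fixed amount and the bound survives composition; with it $O$ is bounded, $\partial_W O\subset\{\norm{h(\cdot)}=\rho\}$, hence $\operatorname{gen}(h(W)\cap S_\rho)\ge\dim W$, and an odd projection onto $V^{\perp}$ subtracts $\operatorname{codim} V$. Relatedly, the competitor cannot be ``a large sphere in $W$'': already for $h=\mathrm{id}$ one has $(S_R\cap W)\cap S_\rho=\emptyset$, so a sphere has pseudo-index $0$; the competitor must be the full ball $B_R\cap W$ (or $W$ itself), which contains the origin, so you must also drop the restriction $0\notin A$ from the domain of $i^*$ (the genus is only ever applied to $h(A)\cap S_\rho\cap V$, which avoids $0$ automatically). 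These are precisely the points the pseudo-index machinery of \cite{Bartolo} is designed to handle; as written, your argument is a correct roadmap but stops short of a proof.
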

	
	Our main result can be stated as follows:
	\begin{thm}
		\label{m}
		Let $\Omega$ be an open, bounded subset of $\R^N, N \geq 3$ with smooth boundary and $\Omega \cap \{ x=0\} \neq \emptyset$. Let $\lambda >0$ and $\lambda^*$ be the eigenvalue of the operator $(-\Delta_\gamma, \mathcal{H}_\gamma(\Omega))$ given by
		\[ \lambda^*= \min_{k\in \mathbb{N}}\{\lambda_k : \lambda < \lambda_k \} \]
		and let $m\in \mathbb{N}$ be its multiplicity. Assume that
		\[ \lambda^* - \lambda < \frac{S_\Omega}{|\Omega|^{\frac{\2-2}{\2}}(\operatorname{diam} \Omega)^{\frac{\mu}{\2}}},\]
		where $\diam \Omega$ is the diameter of the domain $\Omega$ with respect to the Grushin norm $d(\cdot)$ and $S_\Omega$ is defined in \eqref{1.5}. Then, problem \eqref{a} admits at least $m$ pairs of nontrivial solutions $\{ -u_{\gamma,i}, u_{\gamma,i}\}$, $i = 1, 2, \cdots, m$ such that $\|u_{\gamma, i}\|_\gamma \to 0$ as $\lambda \to \lambda^*$.
	\end{thm}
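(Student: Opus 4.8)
The plan is to apply the abstract critical point theorem, Theorem \ref{thm1}, to the energy functional $J_\gamma$ on the Hilbert space $H=\mathcal{H}_\gamma(\Omega)$. Write $\lambda^*=\lambda_{j+1}=\cdots=\lambda_{j+m}$, where $\lambda_j<\lambda<\lambda^*$ and $m$ is the multiplicity, and choose
$$V=\mathcal{H}_j^{\perp}(\Omega),\qquad W=\mathcal{H}_{\gamma,j+m}(\Omega)=\operatorname{span}\{\phi_1,\dots,\phi_{j+m}\}.$$
Then $\operatorname{codim}V=j$ and $\dim W=j+m$, so $\dim W-\operatorname{codim}V=m$, which is exactly the number of pairs sought; this settles (iii)(c), and condition (i) is immediate because $J_\gamma$ is even with $J_\gamma(0)=0$.

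For the linking geometry I would treat (iii)(b) and (iii)(a) separately. On $V=\mathcal{H}_j^{\perp}(\Omega)$ the variational characterization of $\lambda^*$ gives $\norm{u}_2^2\le(\lambda^*)^{-1}\norm{u}_\gamma^2$, so the quadratic part satisfies $\frac12\norm{u}_\gamma^2-\frac{\lambda}{2}\norm{u}_2^2\ge\frac12(1-\lambda/\lambda^*)\norm{u}_\gamma^2>0$; since the Choquard term is of order $\norm{u}_\gamma^{2\cdot\2}$ by the Hardy--Littlewood--Sobolev inequality, taking $\norm{u}_\gamma=\rho$ small yields $J_\gamma(u)\ge\delta>0$, which is (b). For (iii)(a), expanding $u\in W$ in the eigenbasis and using that every eigenvalue appearing in $W$ is at most $\lambda^*$ gives
$$\tfrac12\norm{u}_\gamma^2-\tfrac{\lambda}{2}\norm{u}_2^2\le\tfrac12(\lambda^*-\lambda)\norm{u}_2^2.$$
Bounding the nonlocal term from below by $d(z-w)\le\diam\Omega$, namely
$$\int_\Omega\!\!\int_\Omega\frac{\abs{u(z)}^{\2}\abs{u(w)}^{\2}}{d(z-w)^\mu}\,dw\,dz\ \ge\ \frac{1}{(\diam\Omega)^{\mu}}\Big(\int_\Omega\abs{u}^{\2}\Big)^{2},$$
and controlling $\norm{u}_2^2\le\abs{\Omega}^{(\2-2)/\2}\norm{u}_{\2}^2$ by H\"older, the estimate on $W$ reduces to maximizing the one-variable function $s\mapsto\alpha s-\kappa s^{\2}$ in $s=\norm{u}_{\2}^2$. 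A direct computation yields $\sup_W J_\gamma\le\beta'$ with
$$\beta'=\frac{\2-1}{2\,\2}\,(\lambda^*-\lambda)^{\frac{\2}{\2-1}}\,\abs{\Omega}^{\frac{\2-2}{\2-1}}\,(\diam\Omega)^{\frac{\mu}{\2-1}},$$
and the hypothesis $\lambda^*-\lambda<S_\Omega\,\abs{\Omega}^{-(\2-2)/\2}(\diam\Omega)^{-\mu/\2}$ is exactly equivalent to $\beta'<\beta$, where $\beta=\frac{\2-1}{2\,\2}S_\Omega^{\2/(\2-1)}$ is the compactness threshold.

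The main obstacle is condition (ii), the Palais--Smale condition on $(0,\beta)$. Here I would first show boundedness of PS sequences from the structure of $J_\gamma$ (using $\2>1$), extract a weak limit $u_0$ that is a weak solution, and then run a concentration--compactness argument for the nonlocal term. The key tool is a nonlocal Brezis--Lieb splitting $D(u_k)=D(u_k-u_0)+D(u_0)+o(1)$ for the double Choquard integral $D$, together with the corresponding splitting of $\norm{\cdot}_\gamma^2$; combined with the Grushin Hardy--Littlewood--Sobolev inequality defining $S_\Omega$, this forces the non-vanishing remainder $u_k-u_0$ to carry energy at least $\beta$ unless it vanishes. Since the PS level $c<\beta$, the remainder must converge strongly, giving $u_k\to u_0$ in $\mathcal{H}_\gamma(\Omega)$. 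The delicate points are adapting the Brezis--Lieb lemma to the Grushin homogeneous norm $d(\cdot)$ and identifying the threshold $\beta$ with the best constant $S_\Omega$ through the anisotropic dilations $\tau_t$.

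Finally, Theorem \ref{thm1} produces $m$ pairs $\{-u_{\gamma,i},u_{\gamma,i}\}$ of critical points of $J_\gamma$, hence weak solutions of \eqref{a}, with critical values in $[\delta,\beta']$. For the asymptotics, testing the equation with $u_{\gamma,i}$ gives $\norm{u_{\gamma,i}}_\gamma^2=\lambda\norm{u_{\gamma,i}}_2^2+D(u_{\gamma,i})$, so that $J_\gamma(u_{\gamma,i})=\frac{\2-1}{2\,\2}D(u_{\gamma,i})$; since $J_\gamma(u_{\gamma,i})\le\beta'\to0$ as $\lambda\to\lambda^*$, we obtain $D(u_{\gamma,i})\to0$, whence $\norm{u_{\gamma,i}}_{\2}\to0$ by the lower bound above and then $\norm{u_{\gamma,i}}_2\to0$ by H\"older. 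Substituting back into the test-function identity yields $\norm{u_{\gamma,i}}_\gamma\to0$, which completes the proof.
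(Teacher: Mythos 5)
Your proposal is correct and follows essentially the same route as the paper's proof: the Bartolo--Benci--Fortunato theorem applied with the same subspaces $V=\mathcal{H}_j^{\perp}(\Omega)$ and $W=\mathrm{span}\{\phi_1,\dots,\phi_{j+m}\}$, the same $W$-estimate via the diameter bound on the kernel, H\"older's inequality and one-variable maximization (your $\beta'$ coincides with the paper's), the same nonlocal Brezis--Lieb/HLS compactness analysis below the threshold $\beta=\frac{\2-1}{2\cdot\2}S_\Omega^{\frac{\2}{\2-1}}$, and the same Nehari-identity argument giving $\|u_{\gamma,i}\|_\gamma\to 0$ as $\lambda\to\lambda^*$. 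The only cosmetic difference is that the paper writes the compactness threshold in terms of $N_\gamma$ and $\mu$, which equals your expression since $\frac{\2-1}{2\cdot\2}=\frac{N_\gamma-\mu+2}{4N_\gamma-2\mu}$ and $\frac{\2}{\2-1}=\frac{2N_\gamma-\mu}{N_\gamma-\mu+2}$.
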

	
	The main contributions of this paper are as follows:\begin{itemize}
		\item [(i)] To the best of our knowledge, a Hardy-Littlewood-Sobolev inequality in the context of the Grushin operator has not been available in the literature. In this paper, we establish such an inequality, which plays a crucial role in addressing the critical Choquard-type nonlinearity.
		\item [(ii)] We also introduce a new critical level for the compactness of Palais-Smale sequences and establish bifurcation and multiplicity results using the abstract critical point theorem due to \cite{Bartolo}.
		\item [(iii)] Our work extends  \cite{Bisci,Cerami,Fiscella} to the nonlocal Grushin-Choquard problems.
		\item [(iv)] Results of this paper are new even for the case $\gamma=0$.
	\end{itemize}
	
	The outline of the paper is as follows. In Section \ref{sec1}, we establish a version of the Hardy-Littlewood-Sobolev inequality in the Grushin setting. In Section \ref{sec2}, we present preliminary results that are essential for proving the main theorem, including a compactness result for Palais-Smale sequences. In Section \ref{sec3}, we prove some geometric properties of the functional and provide proof of the main theorem.
	
	\section{Hardy-Littlewood-Sobolev inequality} \label{sec1}
	
	\setcounter{section}{2} \setcounter{equation}{0} 
	In order to address Choquard-type problems, the Hardy-Littlewood-Sobolev inequality plays a crucial role. As far as we know, a complete proof of this inequality in the context of the Grushin operator does not appear in the literature. Therefore, we generalize the classical results of \cite{Stein} to the setting of the Grushin operator. We include a detailed outline of its proof for completeness.
	
	Define the following integral operator:
	\[   I_{\gamma, \mu} f(z) = \int_{\R^N} \frac{f(w)}{d(z-w)^\mu} \ dw \ \text{ with } \mu \in (0, N_\gamma). \]
	
	In order to prove the Hardy-Littlewood-Sobolev (HLS) inequality, we first need to understand when the operator $I_{\gamma, \mu}$ is bounded between Lebesgue spaces.
	
	For this reason, we consider the following question. Given $\mu$ with $0 < \mu < N_\gamma$, for which pairs $p$ and $q$ is the operator $f \mapsto I_{\gamma,\mu}(f)$ bounded from $L^p(\mathbb{R}^n)$ to $L^q(\mathbb{R}^n)$?
	That is, when does the inequality
	\begin{equation}
		\label{eq2}
		\|I_{\gamma,\mu}(f)\|_{q} \leq C \|f\|_{p}
	\end{equation}
	hold for some constant $C > 0$?
	
	There is a basic necessary condition for the boundedness of the operator $I_{\gamma, \mu}$, which comes from the fact that its kernel $d(z)^{-\mu}$ is homogeneous. To understand this, we consider how both sides of the equation \eqref{eq2} behave under scaling. For each $t>0$, define the dilation $\tau_t: \R^{N} \to \R^{N}$ by
	\[
	\tau_t f(z) := f(tx, t^{\gamma+1}y).
	\]
	By change of variable, we get
	\begin{equation*}
		\|\tau_tf(z)\|_p= t^{-\frac{N_\gamma}{p}} \|f\|_p \quad \text{ and } \quad \|I_{\gamma,\mu} (\tau_t f)\|_q = t^{\mu-N_\gamma-\frac{N_\gamma}{q}} \|I_{\gamma,\mu}(f)\|_q.
	\end{equation*}
	Hence, for the inequality \eqref{eq2} to hold, a necessary relation between the exponents $p$ and $q$ is
	\[ \frac{1}{q}= \frac{1}{p} + \frac{\mu}{N_\gamma} -1. \]
	In the next result, we show that this condition is also sufficient for the inequality \eqref{eq2} to hold.
	\begin{thm}
		\label{lem1.2}
		Let $0 < \mu < N_\gamma$, and let $1 \leq p < q < \infty$ satisfy
		$\frac{1}{q} = \frac{1}{p} + \frac{\mu}{N_\gamma}-1.$
		Then the following hold:
		\begin{itemize}
			\item[(a)] If $f \in L^p(\mathbb{R}^N)$, then the integral $I_{\gamma, \mu}f(z)$ converges absolutely for almost every $z \in \mathbb{R}^N$.
			
			\item[(b)] If $1< p $, then the operator $I_{\gamma, \mu}$ is bounded from $L^p(\mathbb{R}^N)$ to $L^q(\mathbb{R}^N)$, i.e.,
			\[
			\| I_{\gamma, \mu}(f) \|_{q} \leq C \|f\|_{p},
			\]
			where $C$ is a constant independent of $f$.
			
			\item[(c)] If $f \in L^1(\mathbb{R}^N)$, and $q = \frac{N_\gamma}{\mu}$, then for all $\kappa > 0$,
			\[
			\left|\left\{ z \in \mathbb{R}^N : |I_{\gamma, \mu}(f)(z)| > \kappa \right\}\right|
			\leq \left(\frac{C\|f\|_{1}}{\kappa} \right)^q.
			\]
			That is, the operator $I_{\gamma, \mu}$ is of weak-type $(1, \frac{N_\gamma}{\mu})$.
		\end{itemize}
	\end{thm}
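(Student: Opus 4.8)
The plan is to adapt the classical argument of Stein to the homogeneous structure carried by the Grushin norm, combining the Hardy-Littlewood maximal function with a splitting-and-optimization argument. First I would record the measure-geometric facts that make the classical proof run. The triple $(\R^N, d, dz)$ is a space of homogeneous type: the anisotropic dilations $\tau_t$ have Jacobian $t^{N_\gamma}$ (the $x$-variables scale by $t$, the $y$-variables by $t^{\gamma+1}$, and $m+(\gamma+1)n = N_\gamma$), so $B_r = \tau_r(B_1)$ gives $|B_r(z)| = \omega\, r^{N_\gamma}$ with $\omega = |B_1|$; in particular Lebesgue measure is doubling with respect to the balls $B_r$. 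Differentiating $|B_r| = \omega r^{N_\gamma}$ yields the polar-coordinate identity
\[
\int_{\R^N} g(d(w))\,dw = N_\gamma \omega \int_0^\infty g(r)\, r^{N_\gamma - 1}\,dr
\]
for integrands depending only on $d$. Defining $Mf(z) = \sup_{r>0} |B_r(z)|^{-1}\int_{B_r(z)} |f|\,dw$, the Vitali covering lemma together with the doubling property gives that $M$ is of weak type $(1,1)$ and bounded on $L^p(\R^N)$ for every $1 < p \le \infty$; these are the only external results I would invoke.

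Next I would establish the basic pointwise estimate. Fix $\delta > 0$ and split $I_{\gamma,\mu}f(z)$ into the near part over $\{d(z-w) < \delta\}$ and the far part over $\{d(z-w) \ge \delta\}$. For the near part, decomposing the domain into the dyadic annuli $\{2^{-k-1}\delta \le d(z-w) < 2^{-k}\delta\}$, bounding $d(z-w)^{-\mu}$ by $(2^{-k-1}\delta)^{-\mu}$ on each annulus, and estimating $\int_{B_{2^{-k}\delta}(z)}|f|$ by $\omega (2^{-k}\delta)^{N_\gamma} Mf(z)$, I sum the resulting geometric series (convergent precisely because $N_\gamma - \mu > 0$) to obtain
\[
\Bigl| \int_{d(z-w) < \delta} \frac{f(w)}{d(z-w)^\mu}\,dw \Bigr| \le C\,\delta^{N_\gamma - \mu}\, Mf(z).
\]
For the far part, when $p > 1$ I apply Hölder's inequality with exponent $p'$; the tail integral $\int_{d(w) \ge \delta} d(w)^{-\mu p'}\,dw$ converges because $\mu p' > N_\gamma$ — a consequence of the scaling relation $\mu/N_\gamma = 1 - 1/p + 1/q$ together with $1/q > 0$ — and by the polar formula equals $C \delta^{N_\gamma - \mu p'}$, so the far part is bounded by $C\|f\|_p\, \delta^{N_\gamma/p' - \mu}$.

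Then I would optimize and collect the three statements. Since $N_\gamma - \mu > 0$ and $N_\gamma/p' - \mu < 0$, the two contributions are balanced by choosing $\delta = (\|f\|_p/Mf(z))^{p/N_\gamma}$; simplifying the exponents via $N_\gamma - \mu = N_\gamma(1/p - 1/q)$ yields the pointwise bound
\[
|I_{\gamma,\mu}f(z)| \le C\, \|f\|_p^{1 - p/q}\, (Mf(z))^{p/q}.
\]
Part (a) is then immediate, since $Mf(z) < \infty$ for a.e.\ $z$ when $f \in L^p$, so the defining integral converges absolutely almost everywhere. For part (b) I raise this inequality to the $q$-th power, integrate over $\R^N$, and apply the $L^p$-boundedness of $M$ to get $\|I_{\gamma,\mu}f\|_q^q \le C\|f\|_p^{q-p}\|Mf\|_p^p \le C\|f\|_p^q$. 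For the endpoint (c) with $p = 1$ and $q = N_\gamma/\mu$, I keep the same near-part bound but estimate the far part trivially by $\delta^{-\mu}\|f\|_1$; choosing $\delta = (2\|f\|_1/\kappa)^{1/\mu}$ forces the far part below $\kappa/2$ uniformly in $z$, so that $\{|I_{\gamma,\mu}f| > \kappa\} \subseteq \{Mf > c\,\kappa\,\delta^{\mu - N_\gamma}\}$, and the weak $(1,1)$ bound for $M$ gives the claim after inserting $\delta$ and using $1 + (N_\gamma - \mu)/\mu = N_\gamma/\mu = q$.

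The main obstacle I anticipate is not any single estimate but verifying that the Grushin homogeneous structure genuinely qualifies as a space of homogeneous type — precisely, the exact scaling $|B_r| = \omega r^{N_\gamma}$, the doubling property, and the resulting weak $(1,1)$ and $L^p$ bounds for $M$. Once these are in hand, the remainder is careful bookkeeping of the exponents dictated by the relation $1/q = 1/p + \mu/N_\gamma - 1$.
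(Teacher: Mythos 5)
Your proof is correct, but it takes a genuinely different route from the paper's. Both arguments begin with the same near/far splitting of $I_{\gamma,\mu}f$ at a scale $\delta$ (the paper's $\alpha$), but they diverge in how the pieces are controlled and recombined. The paper estimates the near part in $L^p$ by Young's inequality (the kernel $d(\cdot)^{-\mu}$ is integrable on $B_\alpha$ since $\mu<N_\gamma$) and the far part in $L^\infty$ by H\"older, then chooses $\alpha$ \emph{uniformly in $z$} as a function of the level $\kappa$ so that the far level set is empty; Chebyshev then gives the weak-type $(p,q)$ estimate, part (c) is the case $p=1$, and the strong bound (b) is obtained by invoking the Marcinkiewicz interpolation theorem between two weak-type points. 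You instead run the splitting \emph{pointwise}: the near part is dominated by $C\delta^{N_\gamma-\mu}Mf(z)$ via dyadic annuli, the far part by H\"older, and optimizing $\delta=\delta(z)$ yields the Hedberg-type inequality $|I_{\gamma,\mu}f(z)|\le C\|f\|_p^{1-p/q}\,(Mf(z))^{p/q}$, from which (a), (b) and (c) follow directly from the maximal theorem, with no interpolation at all. Your exponent bookkeeping is right: $\mu p'>N_\gamma$ does follow from $1/q>0$, and both the near and far contributions collapse to $\|f\|_p^{1-p/q}(Mf)^{p/q}$ under your choice of $\delta$, as does the endpoint computation $1+(N_\gamma-\mu)/\mu=q$ in (c). What your route buys is a stronger pointwise estimate and a self-contained proof of (b); what it costs is the maximal-function theory on the Grushin structure, which you rightly flag as the main external input. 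One point you assert but should verify explicitly there: that $d(z-w)$ is a genuine quasi-metric, i.e. $d(a+b)\le C\bigl(d(a)+d(b)\bigr)$, without which the Vitali covering argument behind the weak $(1,1)$ bound for $M$ has no footing. This does hold, because the dilations $\tau_t$ are linear (hence distribute over sums), $d$ is symmetric and $\tau_t$-homogeneous of degree one, and the set $\{d(a)+d(b)=1\}$ is compact; with that line added, your proof is complete and, incidentally, hands over a pointwise inequality the paper's argument does not provide.
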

	\begin{proof}
	For any $\alpha > 0$, we decompose the operator $I_{\gamma,\mu}$ as $I^1_{\gamma,\mu}+I^2_{\gamma,\mu}$, where
	\[
	I^1_{\gamma,\mu} f(z) = \int_{d(z - w) < \alpha} \frac{f(w)}{d(z-w)^\mu}\ dw,
	\] and
	\[ I^2_{\gamma,\mu} f(z) = \int_{d(z - w) \geq \alpha} \frac{f(w)}{d(z-w)^\mu}\ dw. \]
	At this stage, let $\alpha > 0$ be a fixed constant. Its precise value is not essential for the analysis that follows.
	
	The first integral $I^1_{\gamma, \mu}f(z)$
	is the convolution of the function $f \in L^p(\mathbb{R}^N)$ with the kernel $ \frac{1}{d(z)^{\mu}}$, which is in  $L^1(B_\alpha)$ because $\mu< N_\gamma$. Therefore, this integral converges absolutely for almost every $z \in \mathbb{R}^N$. Similarly, the second integral $I^2_{\gamma,\mu} f(z)$
	is the convolution of $f \in L^p(\mathbb{R}^N)$  with the kernel $\frac{1}{d(z)^{\mu}}$. Note that, this kernel belongs to $ L^{p'}(\R^N \setminus B_\alpha)$, where $\frac{1}{p} + \frac{1}{p'} = 1$. Indeed, the integral
	\[
	\int_{d(z) \geq \alpha} d(z)^{-\mu p'} \ dz < \infty,
	\]
	provided $\mu p' > N_\gamma$, which is equivalent to $q < \infty$, where $ \frac{1}{q} = \frac{1}{p} + \frac{\mu}{N_\gamma} - 1$.
	
	Thus, both integrals are absolutely convergent a.e., and this establishes part $(a)$ of the theorem.
	
	Next, we aim to show that the operator $f \mapsto I_{\gamma, \mu} f$ is of weak type $(p, q)$, that is,
	\begin{equation}
		\label{1.3}
		\left| \{ z \in \R^N: |I_{\gamma, \mu} f(z)| > \kappa \}\right| \leq  \left( \frac{C \|f\|_p}{\kappa}\right)^q, \  f \in L^p(\R^N) \text{ and for all } \ \kappa >0.
	\end{equation}
	Thus, for any  $\kappa > 0$,
	\begin{align*}
		\left| \left\{ z \in \R^N : |I_{\gamma,\mu} f(z)| > 2\kappa \right\} \right| &\leq \left| \left\{ z\in \R^N : |I^1_{\gamma, \mu} f(z)| > \kappa \right\} \right|\\
		& \quad + \left| \left\{ z \in \R^N : |I^2_{\gamma,\mu} f(z)| > \kappa \right\} \right|.
	\end{align*}
	We note that it is sufficient to prove the inequality \eqref{1.3} with $2\kappa$ in place of $\kappa$ and $\| f \|_{p} = 1$.
	
	By Young's inequality \cite{Loss}, we have
	\[
	\| I^1_{\gamma, \mu} f \|_{p} \leq C \left( \int_{d(w) < \alpha} \frac{1}{d(w)^\mu} \ dw \right) \| f \|_{p} = C_1 \int_0^\alpha \rho^{N_\gamma-\mu-1} \ d\rho = C_1 \alpha^{N_\gamma-\mu}.
	\]
	Hence,
	\[
	\left| \left\{ z\in \R^N : |I^1_{\gamma,\mu} f(z)| > \kappa \right\} \right| \leq \frac{\| I^1_{\gamma,\mu} f \|_{p}^p}{\kappa^p} \leq C_1 \alpha^{p(N_\gamma-\mu)} \kappa^{-p}.
	\]
	On the other hand, by Young's inequality, we obtain
	\[
	\| I^2_{\gamma,\mu} f \|_{\infty} \leq C \left( \int_{d(w) \geq \alpha} \frac{1}{d(w)^{\mu p'}} \ dw \right)^{1/p'} \| f \|_{p} = C_2 \alpha^{-N_\gamma/q}.
	\]
	Choose $\alpha$ such that $C_2 \alpha^{-N_\gamma/q} = \kappa$. Then
	\[
	\left| \left\{ z \in \R^N : |I^2_{\gamma,\mu} f(z)| > \kappa \right\} \right| = 0,
	\]
	and
	\[
	\left| \left\{ z\in \R^N : |I^1_{\gamma,\mu} f(z)| > \kappa \right\} \right| \leq C_1 \alpha^{p(N_\gamma-\mu) }\kappa^{-p} = C_3 \kappa^{-q}.
	\]
	This establishes inequality \eqref{1.3}, which shows that the mapping $f \mapsto I_{\gamma,\mu} f $ is of weak type $(p, q)$.
	
	Now, in the special case $p = 1$, we have $q = \frac{N_\gamma}{\mu}$, and the above argument implies that $I_{\gamma, \mu}$ is of weak type $(1, q)$ with $q= \frac{N_\gamma}{\mu}$. This establishes part $(c)$ of the theorem.
	
	To conclude part (b), i.e. the strong-type boundedness condition
	\[
	\|I_{\gamma, \mu} f\|_{q} \leq C \|f\|_{p},
	\]
	we apply the Marcinkiewicz interpolation theorem.
	Let $1 < p < \frac{N_\gamma}{N_\gamma - \mu}$ and define $q$ by
	$\frac{1}{q} = \frac{1}{p} + \frac{\mu}{N_\gamma} - 1.$
	Choose $p_1, p_2$ such that
	$1 < p_1 < p < p_2 < \frac{N_\gamma}{N_\gamma - \mu},$
	and write
	\[
	\frac{1}{p} = \frac{\theta}{p_1} + \frac{1-\theta}{p_2}
	\quad \text{for some } \theta \in (0,1).
	\]
	Let $q_1, q_2$ be defined by
	\[
	\frac{1}{q_i} = \frac{1}{p_i} + \frac{\mu}{N_\gamma} - 1, \quad i=1,2.
	\]
	Since $I_{\gamma,\mu}$ is of weak type $(p_i,q_i)$ for $i=1,2$, the Marcinkiewicz interpolation theorem yields that it is of strong type $(p,q_0)$, where
	\[
	\frac{1}{q_0} = \frac{\theta}{q_1} + \frac{1-\theta}{q_2}.
	\]
	A direct computation shows that $q_0 = q$, which completes the proof.
	\end{proof}
	
	The following form of the Hardy-Littlewood-Sobolev inequality in the Grushin setting is a direct consequence of the previous theorem combined with an application of H\"older's inequality.
	
	\begin{prop}[Grushin HLS inequality]
		\label{Hardy}
		Let $p, r > 1$ and $0 < \mu < N_\gamma$ with $\frac{1}{p} + \frac{\mu}{N_\gamma} + \frac{1}{r} = 2$. Let $f \in L^{p}(\R^N)$ and $h \in L^{r}(\R^N)$. There exists a positive constant $C(p,N_\gamma, \mu, r)$, independent of $f, h$ such that
		\begin{equation*}
			\int_{\R^N} \int_{\R^N} \frac{f(z)h(w)}{d(z-w)^{\mu}}dz dw\leq C(p,N_\gamma, \mu, r)\|f\|_{p}\|h\|_{r}.
		\end{equation*}
	\end{prop}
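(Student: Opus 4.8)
The plan is to realize the bilinear form as a pairing against the Grushin potential $I_{\gamma,\mu}$ and then to invoke the strong-type boundedness from Theorem~\ref{lem1.2}(b). First I would note that the homogeneous norm is symmetric, $d(-z)=d(z)$, since it depends only on $|x|$ and $|y|$; hence $d(z-w)=d(w-z)$ and the kernel is symmetric in $z,w$. Replacing $f,h$ by $|f|,|h|$ and applying Tonelli's theorem to the resulting nonnegative integrand, I would rewrite the double integral as a single pairing against the potential of $|f|$:
\[
\int_{\R^N}\int_{\R^N}\frac{f(z)h(w)}{d(z-w)^\mu}\,dz\,dw
\ \le\ \int_{\R^N}|h(w)|\,\big(I_{\gamma,\mu}|f|\big)(w)\,dw.
\]

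The second step is a single application of H\"older's inequality with the conjugate pair $(r,r')$, which yields
\[
\int_{\R^N}|h|\,\big(I_{\gamma,\mu}|f|\big)\,dw \ \le\ \|h\|_r\,\big\|I_{\gamma,\mu}|f|\big\|_{r'},
\qquad \tfrac{1}{r}+\tfrac{1}{r'}=1.
\]
It then remains to bound $\big\|I_{\gamma,\mu}|f|\big\|_{r'}$ by $\|f\|_p$, which is precisely Theorem~\ref{lem1.2}(b) provided the target exponent $q$ there equals $r'$. Setting $q=r'$, the defining relation $\tfrac1q=\tfrac1p+\tfrac{\mu}{N_\gamma}-1$ becomes $1-\tfrac1r=\tfrac1p+\tfrac{\mu}{N_\gamma}-1$, i.e. $\tfrac1p+\tfrac{\mu}{N_\gamma}+\tfrac1r=2$, which is exactly the hypothesis. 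Chaining the two displays gives the claimed inequality with $C(p,N_\gamma,\mu,r)$ equal to the operator norm of $I_{\gamma,\mu}\colon L^p(\R^N)\to L^{r'}(\R^N)$.

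Before applying Theorem~\ref{lem1.2}(b) I would check that its standing hypotheses $1<p<q<\infty$ are met, so that the scaling relation does all the bookkeeping. The assumption $p>1$ is given; $q=r'<\infty$ follows from $r>1$; and $p<q$ follows because $\tfrac{1}{r'}=\tfrac1p+\tfrac{\mu}{N_\gamma}-1<\tfrac1p$ exactly when $\mu<N_\gamma$, which is assumed. Thus all the constraints are automatically encoded in $\tfrac1p+\tfrac{\mu}{N_\gamma}+\tfrac1r=2$ together with $0<\mu<N_\gamma$, and the argument closes without further work.

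The genuinely delicate point is the word \emph{sharp}. The H\"older-plus-boundedness scheme above only produces \emph{some} finite admissible constant; taking the infimum over all admissible constants then gives a well-defined best constant $C(p,N_\gamma,\mu,r)<\infty$, and in this sense the statement is immediate. Establishing that this optimal constant is actually \emph{attained}, and identifying the extremal functions, is a separate and much harder matter: in the Euclidean case it is Lieb's theorem, proved via symmetric-decreasing rearrangement, and in the Grushin setting the absence of an exact rearrangement inequality adapted to the anisotropic dilations $\tau_t$ is the main obstacle. I would therefore read ``sharp constant'' here as the optimal (least) admissible constant, whose finiteness is what the above proof delivers, and I would not attempt to prove attainment by this route.
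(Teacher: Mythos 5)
Your proof is correct and follows essentially the same route as the paper: rewrite the double integral via Fubini/Tonelli as the pairing $\int_{\R^N} |h|\,\big(I_{\gamma,\mu}|f|\big)\,dw$, apply H\"older with the conjugate pair $(r,r')$, and invoke Theorem~\ref{lem1.2}(b) with $q=r'$, the exponent bookkeeping reducing exactly to $\tfrac1p+\tfrac{\mu}{N_\gamma}+\tfrac1r=2$. The extra care you take --- using Tonelli on nonnegative integrands rather than bare Fubini, verifying the standing hypotheses $1<p<q<\infty$ of Theorem~\ref{lem1.2}(b), and reading ``sharp'' as the least admissible (finite) constant rather than an attained one --- only tightens steps the paper leaves implicit.
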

	\begin{proof}
	Applying Fubini's theorem, we get
	\[
	\int_{\mathbb{R}^N} \int_{\mathbb{R}^N} \frac{f(z) h(w)}{d(z-w)^\mu} \ dz dw
	= \int_{\mathbb{R}^N} h(w) I_{\gamma,\mu} f(w) \ dw.
	\]
	By Theorem~\ref{lem1.2}\textnormal{(b)}, if \( \frac{1}{q} = \frac{1}{p} + \frac{\mu}{N_\gamma} - 1 \), then
	\[
	\|I_{\gamma, \mu} f\|_{q} \leq C \|f\|_{p}.
	\]
	Now apply H\"older's inequality with exponents $q$ and $r$, where $\frac{1}{r} + \frac{1}{q} = 1$. Then, we have
	\[
	\int_{\mathbb{R}^N} h(w) I_{\gamma,\mu} f(w) \ dw
	\leq \|h\|_{r} \|I_{\gamma,\mu} f\|_{q}
	\leq C \|f\|_{p} \|h\|_{r}.
	\]
	The condition $\frac{1}{r} + \frac{1}{q} = 1$ and $\frac{1}{q} = \frac{1}{p} +\frac{\mu}{N_\gamma} -1$ together give
	\[
	\frac{1}{p} + \frac{\mu}{N_\gamma} + \frac{1}{r} = 2.
	\]
	Hence, the HLS inequality holds with constant  $C = C(p, N_\gamma, \mu, r)$, completing the proof.
	\end{proof}
	
	We observe that, by Proposition \ref{Hardy}, the integral
	\[ \int_{\R^N} \left( \int_{\R^N} \frac{|u(w)|^{q}}{d(z-w)^{\mu}}dw \right) |u(z)|^{q} dz \]
	is well-defined provided that $|u|^q \in L^t(\R^N)$ for some $t>1$ satisfying $\frac{2}{t} +\frac{\mu}{N_\gamma} =2$.
	Now, if $u \in \mathcal{H}_\gamma(\mathbb{R}^N)$, then by the embedding
	$\mathcal{H}_\gamma(\mathbb{R}^N) \hookrightarrow L^{p}(\mathbb{R}^N),
	\quad p \in [2,2_\gamma^*],$
	we must have
	\[
	2 \le tq \le \frac{2N_\gamma}{N_\gamma - 2},
	\]
	equivalently,
	\[
	\frac{2N_\gamma - \mu}{N_\gamma}
	\le q \le
	\frac{2N_\gamma - \mu}{N_\gamma - 2}.
	\]  
	Therefore, $\frac{2N_\gamma - \mu}{N_\gamma}$ is called the lower critical exponent, and
	$2^*_{\gamma,\mu} := \frac{2N_\gamma - \mu}{N_\gamma - 2}$
	is called the upper critical exponent in the setting of the Grushin HLS inequality.
	To the best of our knowledge, neither the lower nor the upper critical case has been previously studied for Choquard-type nonlocal problems in the Grushin setting. In this work, we consider for the first time the upper critical exponent case. 
	
	From Proposition \ref{Hardy}, for all $u \in \mathcal{H}_\gamma(\mathbb{R}^N)$, we have
	\[
	\left(
	\int_{\mathbb{R}^N} \left(
	\int_{\mathbb{R}^N}
	\frac{|u(w)|^{2^*_{\gamma,\mu}}}{d(z-w)^{\mu}}\ dw \right)
	\,|u(z)|^{2^*_{\gamma,\mu}}\,dz
	\right)^{\frac{1}{2^*_{\gamma,\mu}}}
	\le
	C(N_\gamma,\mu)^{\frac{1}{2^*_{\gamma,\mu}}}
	\|u\|^2_{2^*_{\gamma}},
	\]
	where $C(N_\gamma,\mu)>0$ is a constant.
	
	From now on, we restrict ourselves to a bounded open subset $\Omega \subset \R^N$. From Proposition \ref{Hardy}, we conclude that  $J_\gamma$ is well-defined on $\mathcal{H}_\gamma(\Omega)$. It is easy to show that $J_\gamma\in C^1(\mathcal{H}_\gamma(\Omega))$ by standard techniques. The critical points of $J_\gamma$ coincide with the weak solutions of equation \eqref{a}.
	
	Next, we introduce the best constant associated with the Hardy-Littlewood-Sobolev inequality in the Grushin setting on $\Omega$, defined by
	
	\begin{equation}
		\label{1.5}
		S_\Omega:= \inf_{u\in \mathcal{H}_\gamma(\Omega) \setminus \{0\}} \frac{\displaystyle\int_{\Omega} |\nabla_\gamma u|^2 dz}{\Bigg( \displaystyle\int_{\Omega} \left( \displaystyle\int_{\Omega} \frac{|u(w)|^{\2}}{d(z-w)^{\mu}}dw \right) |u(z)|^{\2} dz\Bigg)^{\frac{1}{\2}}}.
	\end{equation}
	This constant plays a crucial role in establishing the Palais-Smale compactness condition for the functional $J_\gamma$.
	\begin{rem}
		To prove Theorem \ref{m}, we have assumed that
		\[   \lambda^* - \lambda < \frac{S_\Omega}{|\Omega|^{\frac{\2-2}{\2}}(\diam \Omega)^{\frac{\mu}{\2}}}.      \]
		Note that the term
		\[
		\frac{S_\Omega}{|\Omega|^{\frac{\2-2}{\2}}(\diam \Omega)^{\frac{\mu}{\2}}}
		\]
		depends on the domain $\Omega$. It is not guaranteed that $$\lambda^* < \frac{S_\Omega}{|\Omega|^{\frac{\2-2}{\2}}(\diam \Omega)^{\frac{\mu}{\2}}}.$$ If we take $\lambda<0$, then the above assumption is violated.
		Hence, $\lambda>0$ condition is considered throughout this article.
	\end{rem}
	\section{Palais-Smale sequence analysis}
	\setcounter{section}{3} \setcounter{equation}{0}
	\label{sec2}
	In this section, we prove the compactness condition for the Palais-Smale sequence. We begin by introducing a few preliminary lemmas essential for proving compactness.
	
	\begin{lem}
		\label{lem2.1}
		Let $N\geq 3$, $ 0 < \mu < N_\gamma$, and $\{u_n\}$ be a bounded sequence in $L^{2^*_{\gamma}}(\Omega)$ such that  $u_n \to u_0$ a. e. in $\Omega$ as $n \to \infty$. Then the following holds:
		\begin{align*}
			&\int_{\Omega} \left(\int_{\Omega} \frac{ |u_n(w)|^{\2}}{d(z-w)^\mu} dw \right) |u_n(z)|^{\2} dz\\
			& \quad- \int_{\Omega} \left(\int_{\Omega} \frac{ |u_n(w) - u_0(w)|^{\2}}{d(z-w)^\mu} dw\right) |u_n(z) - u_0(z)|^{\2} dz\\
			&\to \int_{\Omega} \left(\int_{\Omega} \frac{ |u_0(w)|^{\2}}{d(z-w)^\mu} dw \right)|u_0(z)|^{\2} dz\quad \text{as} \quad n \to \infty.
		\end{align*}
	\end{lem}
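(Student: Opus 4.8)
The plan is to reduce the statement to a continuity/weak-convergence argument for the symmetric bilinear form furnished by the Grushin HLS inequality. Write $q := \2$ and let $t$ be the exponent determined by $\frac{2}{t}+\frac{\mu}{N_\gamma}=2$, namely $t=\frac{2N_\gamma}{2N_\gamma-\mu}$, so that $qt = 2^*_\gamma$. Since $\{u_n\}$ is bounded in $L^{2^*_\gamma}(\Omega)$, the sequence $|u_n|^q$ is bounded in $L^t(\Omega)$, and by Proposition~\ref{Hardy} the form
\[
B(f,g) := \int_\Omega \int_\Omega \frac{f(w)\,g(z)}{d(z-w)^\mu}\,dw\,dz
\]
is well defined, symmetric and continuous on $L^t(\Omega)\times L^t(\Omega)$, with $|B(f,g)|\le C\|f\|_t\|g\|_t$. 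Setting $v_n := u_n-u_0$, the three integrals appearing in the statement are precisely $B(|u_n|^q,|u_n|^q)$, $B(|v_n|^q,|v_n|^q)$ and $B(|u_0|^q,|u_0|^q)$, so the assertion is equivalent to $B(|u_n|^q,|u_n|^q)-B(|v_n|^q,|v_n|^q)\to B(|u_0|^q,|u_0|^q)$.

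The crucial analytic input is a Brezis--Lieb-type convergence for the composed nonlinearity: putting $R_n := |u_n|^q-|v_n|^q$, I claim $R_n \to |u_0|^q$ strongly in $L^t(\Omega)$. This is the generalized Brezis--Lieb lemma applied with exponent $t$ to $s\mapsto |s|^q$, legitimate precisely because $\{u_n\}$ is bounded in $L^{qt}(\Omega)=L^{2^*_\gamma}(\Omega)$ and $u_n\to u_0$ a.e. I would run it through the elementary inequality $\big||x+c|^q-|x|^q\big|\le \varepsilon|x|^q+C_\varepsilon|c|^q$, valid since $q>1$: with $x=v_n$ and $c=u_0$, writing $G_n:=R_n-|u_0|^q$, this yields $|G_n|\le \varepsilon|v_n|^q+(C_\varepsilon+1)|u_0|^q$, so $\big(|G_n|-\varepsilon|v_n|^q\big)^+$ is dominated by the fixed $L^t$ function $(C_\varepsilon+1)|u_0|^q$ and tends to $0$ a.e.; dominated convergence gives $\big(|G_n|-\varepsilon|v_n|^q\big)^+\to 0$ in $L^t$, and then $\|G_n\|_t\le \|(|G_n|-\varepsilon|v_n|^q)^+\|_t+\varepsilon\||v_n|^q\|_t$ together with $\||v_n|^q\|_t=\|v_n\|_{2^*_\gamma}^{q}\le C$ forces $\limsup_n\|G_n\|_t\le \varepsilon C$, whence $\|R_n-|u_0|^q\|_t\to 0$ on letting $\varepsilon\to 0$.

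Granting this, I would expand by bilinearity using $|u_n|^q=|v_n|^q+R_n$ and the symmetry of $B$:
\[
B(|u_n|^q,|u_n|^q)-B(|v_n|^q,|v_n|^q)=2\,B(|v_n|^q,R_n)+B(R_n,R_n).
\]
Since $R_n\to |u_0|^q$ in $L^t$ and $B$ is continuous, $B(R_n,R_n)\to B(|u_0|^q,|u_0|^q)$, the target limit, so it remains to show $B(|v_n|^q,R_n)\to 0$. Splitting $B(|v_n|^q,R_n)=B(|v_n|^q,R_n-|u_0|^q)+B(|v_n|^q,|u_0|^q)$, the first term is bounded by $C\,\||v_n|^q\|_t\,\|R_n-|u_0|^q\|_t\to 0$; for the second, symmetry of the kernel gives $B(|v_n|^q,|u_0|^q)=\int_\Omega |v_n(z)|^q\, I_{\gamma,\mu}(|u_0|^q)(z)\,dz$, where $I_{\gamma,\mu}(|u_0|^q)\in L^{t'}(\Omega)$ by Theorem~\ref{lem1.2}(b), while $|v_n|^q$ is bounded in $L^t$ with $|v_n|^q\to 0$ a.e., hence $|v_n|^q\rightharpoonup 0$ weakly in the reflexive space $L^t(\Omega)$, so this integral tends to $0$. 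Combining the two displays completes the proof. The \emph{main obstacle} is the Brezis--Lieb step $R_n\to |u_0|^q$ in $L^t$; once that strong convergence is secured, the nonlocal splitting is essentially formal, resting only on the boundedness of $B$ from the Grushin HLS inequality and the weak convergence $|v_n|^q\rightharpoonup 0$.
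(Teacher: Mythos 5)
Your proposal is correct and follows essentially the same route as the paper: a Brezis--Lieb step giving $|u_n|^{\2}-|u_n-u_0|^{\2}\to|u_0|^{\2}$ strongly in $L^{\frac{2N_\gamma}{2N_\gamma-\mu}}(\Omega)$, a bilinear (Fubini) expansion of the double integral, the mapping property of $I_{\gamma,\mu}$ from Theorem~\ref{lem1.2}(b) for the diagonal term, and the weak convergence $|u_n-u_0|^{\2}\rightharpoonup 0$ paired against a strongly convergent potential for the cross term. The only difference is cosmetic: you prove the Brezis--Lieb convergence in-line via the elementary inequality, where the paper simply cites \cite{Brezis.Lieb}.
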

	\begin{proof}
	Since $\{u_n\}$ is bounded in $L^{2^*_\gamma}(\Omega)$ and $u_n \to u_0$ a. e. in $\Omega$, the Br\'ezis-Lieb Lemma \cite{Brezis.Lieb} implies that
	\begin{equation}
		\label{3.1}
		|u_n|^{2^*_{\gamma, \mu}} - |u_n - u_0|^{2^*_{\gamma,\mu}} \to |u_0|^{2^*_{\gamma, \mu}} \quad \text{ in } L^{\frac{2N_\gamma}{2N_\gamma - \mu}}(\Omega).
	\end{equation}
	We now apply Theorem \ref{lem1.2}(b) with
	\[ p = \frac{2N_\gamma}{2N_\gamma-\mu},  \quad  q = \frac{2N_\gamma}{\mu} \quad \text{ and } \quad f = |u_n|^{2^*_{\gamma, \mu}} - |u_n - u_0|^{2^*_{\gamma,\mu}}- |u_0|^{\2},\]
	and using equation \eqref{3.1}, we obtain
	\begin{align}
		\nonumber
		&\displaystyle\left\| \int_\Omega \frac{|u_n|^{2^*_{\gamma, \mu}} - |u_n - u_0|^{2^*_{\gamma,\mu}}- |u_0|^{\2}}{d(z-w)^\mu} \ dw \right\|_{\frac{2N_\gamma}{\mu}} \\
		& \leq C \left\| |u_n|^{2^*_{\gamma, \mu}} - |u_n - u_0|^{2^*_{\gamma,\mu}}- |u_0|^{\2} \right\|_{\frac{2N_\gamma}{2N_\gamma-\mu}} \to 0, \nonumber
	\end{align}
	i.e.
	\begin{equation}
		\label{3.3}
		\int_\Omega \frac{|u_n|^{2^*_{\gamma, \mu}} - |u_n - u_0|^{2^*_{\gamma,\mu}}}{d(z-w)^\mu} \ dw  \to  \int_\Omega \frac{|u_0|^{\2}}{d(z-w)^\mu} \ dw \quad \text{ in } \quad L^{\frac{2N_\gamma}{\mu}}(\Omega).
	\end{equation}
	
	Now, by Fubini's theorem, we have
	\begin{align}
		\label{3.4}
		&\int_{\Omega} \left( \int_\Omega\frac{ |u_n(w)|^{\2}}{d(z-w)^\mu} dw\right) |u_n|^{\2} \ dz \nonumber\\
		&- \int_{\Omega} \left(\int_\Omega\frac{|u_n(w) - u_0(w)|^{\2}}{d(z-w)^\mu} dw\right) |u_n(w) - u_0(w)|^{\2} \ dz \nonumber \\
		&= \int_{\Omega} \left(\int_\Omega \frac{  |u_n(w)|^{\2} - |u_n(w) - u_0(w)|^{\2}}{d(z-w)^\mu} dw\right)\nonumber\\
		& \quad\times \left(|u_n(z)|^{\2} - |u_n(z) - u_0(z)|^{\2} \right) dz \nonumber \\
		&\quad \quad+ 2 \int_{\Omega} \left(\int_\Omega \frac{|u_n(w)|^{\2} - |u_n(w) - u_0(w)|^{\2}}{d(z-w)^\mu} dw\right) |u_n(z) - u_0(z)|^{\2} \ dz.
	\end{align}
	From \eqref{3.1} and \eqref{3.3}, we know that the first term in \eqref{3.4} converges to
	\[
	\int_{\Omega} \left(\int_\Omega \frac{ |u_0(z)|^{\2}}{d(z-w)^\mu} dw\right) |u_0(z)|^{\2} \ dz.
	\]
	For the second term, we note that $|u_n - u_0|^{\2} \rightharpoonup 0$ in $L^{\frac{2N_\gamma}{2N_\gamma - \mu}}(\Omega)$, and combining this with \eqref{3.3}, we conclude that, the second term tends to 0 as $n \to \infty$. This completes the proof.
	\end{proof}
	\begin{lem}
		\label{lem2.3}
		Let $ N \geq 3$, $ 0 < \mu < N_\gamma $, and $\lambda > 0$. If $\{u_n\}\subset \mathcal{H}_\gamma(\Omega)$ is a Palais-Smale sequence of $J_\gamma$, then $\{u_n\}$ is bounded in  $\mathcal{H}_\gamma(\Omega)$. Moreover, if $u_0 \in \mathcal{H}_\gamma(\Omega)$ is the weak limit of $\{u_n\}$, then $u_0$ is a weak solution of the problem \eqref{a}.
	\end{lem}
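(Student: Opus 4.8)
The plan is to prove the two assertions separately. Throughout write $p=\2$ and
$D(u)=\int_\Omega\big(\int_\Omega \tfrac{|u(w)|^{p}}{d(z-w)^\mu}\,dw\big)|u(z)|^{p}\,dz$, and recall that a Palais--Smale sequence satisfies $J_\gamma(u_n)\to c$ and $\langle J_\gamma'(u_n),u_n\rangle=o(\|u_n\|_\gamma)$.

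\emph{Boundedness.} First I would form the combination $J_\gamma(u_n)-\tfrac{1}{2p}\langle J_\gamma'(u_n),u_n\rangle$. Since the Choquard term is homogeneous of degree $2p$, this cancels $D(u_n)$ and gives $(\1-\tfrac{1}{2p})(\|u_n\|_\gamma^2-\lambda\|u_n\|_2^2)=c+o(1)+o(\|u_n\|_\gamma)$. The obstacle is that the lower-order term $-\lambda\|u_n\|_2^2$ is sign-indefinite (the components below $\lambda$ in the spectral decomposition carry the wrong sign), so coercivity cannot be read off directly. I would therefore argue by contradiction: if $\|u_n\|_\gamma\to\infty$, set $v_n=u_n/\|u_n\|_\gamma$, so that $\|v_n\|_\gamma=1$, $v_n\rightharpoonup v$ in $\mathcal H_\gamma(\Omega)$, and $v_n\to v$ in $L^2(\Omega)$ and a.e. by the compact embedding $\mathcal H_\gamma(\Omega)\hookrightarrow L^2(\Omega)$. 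Dividing both $J_\gamma(u_n)\to c$ and $\langle J_\gamma'(u_n),u_n\rangle=o(\|u_n\|_\gamma)$ by $\|u_n\|_\gamma^2$ and using $D(u_n)=\|u_n\|_\gamma^{2p}D(v_n)$, I obtain in the limit that $\|u_n\|_\gamma^{2p-2}D(v_n)\to L:=1-\lambda\|v\|_2^2\ge 0$ together with $\1(1-\lambda\|v\|_2^2)=\tfrac{1}{2p}L$; since $p>1$ these two relations force $L=0$, hence $\lambda\|v\|_2^2=1$ (so $v\ne 0$) while $D(v_n)\to 0$ (because $\|u_n\|_\gamma^{2p-2}\to\infty$). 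Applying the Br\'ezis--Lieb identity of Lemma \ref{lem2.1} to $\{v_n\}$ yields $D(v_n)-D(v_n-v)\to D(v)$, whence $D(v)=0$ and thus $v=0$, contradicting $v\ne 0$. Therefore $\{u_n\}$ is bounded.

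\emph{The weak limit is a solution.} After passing to a subsequence, $u_n\rightharpoonup u_0$ in $\mathcal H_\gamma(\Omega)$, $u_n\to u_0$ in $L^q(\Omega)$ for every $q\in[1,2^*_\gamma)$, and $u_n\to u_0$ a.e. For fixed $\phi\in C_c^\infty(\Omega)$ I pass to the limit in $\langle J_\gamma'(u_n),\phi\rangle\to 0$ termwise. The linear terms $\langle u_n,\phi\rangle_\gamma$ and $\lambda\int_\Omega u_n\phi$ converge by weak convergence and the compact $L^2$ embedding. The crux is the Choquard term $\int_\Omega I_{\gamma,\mu}(|u_n|^{p})\,|u_n|^{p-2}u_n\,\phi\,dz$, whose two factors I would treat asymmetrically. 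Since $\{u_n\}$ is bounded in $L^{2^*_\gamma}(\Omega)$ and $u_n\to u_0$ a.e., and $p\cdot\tfrac{2N_\gamma}{2N_\gamma-\mu}=2^*_\gamma$, we get $|u_n|^{p}\rightharpoonup|u_0|^{p}$ in $L^{2N_\gamma/(2N_\gamma-\mu)}(\Omega)$; as $I_{\gamma,\mu}$ is a bounded linear operator into $L^{2N_\gamma/\mu}(\Omega)$ by Theorem \ref{lem1.2}(b), it is weak-to-weak continuous, so $I_{\gamma,\mu}(|u_n|^{p})\rightharpoonup I_{\gamma,\mu}(|u_0|^{p})$ in $L^{2N_\gamma/\mu}(\Omega)$. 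For the second factor I claim strong convergence $|u_n|^{p-2}u_n\,\phi\to|u_0|^{p-2}u_0\,\phi$ in the dual space $L^{2N_\gamma/(2N_\gamma-\mu)}(\Omega)$: the decisive point is that $(p-1)\tfrac{2N_\gamma}{2N_\gamma-\mu}<2^*_\gamma$, which holds precisely because $N_\gamma>2$, so $\{|u_n|^{p-1}\}$ is bounded in some $L^t(\Omega)$ with $t$ strictly above $\tfrac{2N_\gamma}{2N_\gamma-\mu}$; this uniform higher integrability on the bounded set $\Omega$, combined with a.e. convergence, yields strong convergence by Vitali's theorem. Pairing the weakly convergent factor with the strongly convergent one passes the limit through the nonlinear term, so $\langle J_\gamma'(u_0),\phi\rangle=0$ for all $\phi\in C_c^\infty(\Omega)$, and by density for all $\phi\in\mathcal H_\gamma(\Omega)$, i.e. $u_0$ is a weak solution.

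The main obstacle, and the place where the new Grushin HLS machinery is essential, is the passage to the limit in the critical Choquard nonlinearity: both natural factors sit at the critical integrability, so compactness cannot be used on both at once. The resolution is to exploit the linearity of $I_{\gamma,\mu}$ (weak continuity) on one factor and the strict subcriticality $(p-1)\tfrac{2N_\gamma}{2N_\gamma-\mu}<2^*_\gamma$ of the other (strong convergence via Vitali). In the boundedness step the analogous difficulty is the sign-indefinite term $\lambda\|u_n\|_2^2$, which I neutralize by the normalization/blow-up argument together with the Br\'ezis--Lieb splitting of Lemma \ref{lem2.1}.
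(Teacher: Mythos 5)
Your proof is correct, but your boundedness argument takes a genuinely different route from the paper's. The paper never argues by contradiction: it forms $J_\gamma(u_n)-\frac{1}{2}\langle J_\gamma'(u_n),u_n\rangle_\gamma$ (coefficient $\frac{1}{2}$, not $\frac{1}{2\cdot\2}$), which cancels the quadratic part and isolates the Choquard term, giving $\int_\Omega\big(\int_\Omega \frac{|u_n(w)|^{\2}}{d(z-w)^\mu}\,dw\big)|u_n(z)|^{\2}\,dz\le C(1+\varepsilon_n\|u_n\|_\gamma)$. It then uses the boundedness of $\Omega$ via $d(z-w)\le\diam\Omega$ to bound this term from below by $(\diam\Omega)^{-\mu}\|u_n\|_{\2}^{2\cdot\2}$, so that H\"older yields $\|u_n\|_2^2\le C(1+\varepsilon_n\|u_n\|_\gamma)^{1/\2}$; substituting back into $J_\gamma(u_n)\le C_1$ makes $\frac{1}{2}\|u_n\|_\gamma^2$ dominated by terms growing sublinearly (or linearly with vanishing coefficient $\varepsilon_n$) in $\|u_n\|_\gamma$, and coercivity follows. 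In other words, the sign-indefinite term $-\lambda\|u_n\|_2^2$ that you neutralize by blow-up is controlled in the paper directly by the Choquard term itself, thanks to the domain being bounded; this is quantitative and avoids both the normalization and any appeal to Lemma \ref{lem2.1} at this stage. Your contradiction argument is sound as well: the two limit relations do force $L=0$, and $D(v)=0$ does force $v=0$ since the kernel is strictly positive; note only that a Palais--Smale sequence here has $J_\gamma(u_n)$ merely bounded rather than convergent, which is harmless since dividing a bounded quantity by $\|u_n\|_\gamma^2\to\infty$ still gives $0$. For the second assertion, both proofs run on the same Grushin HLS machinery; the difference is that the paper asserts weak $L^{2N_\gamma/(N_\gamma+2)}$-convergence of the product of two weakly convergent factors (valid here by the standard bounded-plus-a.e.-convergence lemma, but left implicit), whereas your asymmetric pairing --- weak convergence of $I_{\gamma,\mu}(|u_n|^{\2})$ in $L^{2N_\gamma/\mu}(\Omega)$ against strong convergence of $|u_n|^{\2-2}u_n\phi$ in the dual exponent, obtained from Vitali's theorem and the strict inequality $(\2-1)\frac{2N_\gamma}{2N_\gamma-\mu}<2^*_\gamma$ --- supplies precisely the justification the paper omits, at the modest cost of testing first against $\phi\in C_c^\infty(\Omega)$ and concluding by density.
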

	\begin{proof}
	By the definition of the Palais-Smale sequence, it is easy to observe that there exists $C_1 > 0$ such that
	\begin{align}
		\label{2.0}
		|J_\gamma(u_n)| \leq C_1 \quad \text{ and } \quad
		\left|\left\langle J'_\gamma(u_n), u_n \right\rangle_\gamma\right| \leq C_1 \varepsilon_n \|u_n\|_\gamma,
	\end{align}
	where $\varepsilon_n \to 0$ as $n \to \infty$.
	From \eqref{2.0}, we can obtain a constant $C_2>0$ such that
	\begin{align}
		\nonumber
		&J_\gamma(u_n)- \frac{1}{2} \left\langle J_\gamma'(u_n), u_n \right\rangle_\gamma \nonumber\\
		& = \left( \frac{1}{2}- \frac{1}{2 \cdot \2}\right)\int_{\Omega} \left( \int_{\Omega} \frac{ |u_n(w)|^{\2} }{d(z-w)^\mu} dw\right)|u_n(z)|^{\2} dz \leq C_2(1 + \varepsilon_n\|u_n\|_\gamma), \label{3.8}
	\end{align}
	that is
	\begin{equation}
		\label{2.1}
		\int_{\Omega} \left( \int_{\Omega} \frac{ |u_n(w)|^{\2} }{d(z-w)^\mu} dw\right)|u_n(z)|^{\2} dz \leq C_3(1 + \varepsilon_n\|u_n\|_\gamma),
	\end{equation}
	for some $C_3>0$. Let $\diam \Omega$ denotes the diameter of $\Omega$ with respect to the Grushin norm
	$d(\cdot)$, defined by
	\[   \diam \Omega = \sup\{ d(z-w): z,w \in \Omega\}. \]
	From \eqref{3.8}, we have
	\begin{align}
		C_2(1 + \varepsilon_n\|u_n\|_\gamma) & \geq \left( \frac{1}{2}- \frac{1}{2 \cdot \2}\right)\int_{\Omega} \left( \int_{\Omega} \frac{ |u_n(w)|^{\2} }{d(z-w)^\mu} dw\right)|u_n(z)|^{\2} dz\nonumber\\
		& \geq \left( \frac{1}{2}- \frac{1}{2 \cdot \2}\right) \frac{1}{(\diam\Omega)^\mu} \left( \int_\Omega |u_n(z)|^{\2} \ dz\right)^2. \nonumber
	\end{align}
	Thus, there exists a positive constant $C_4$ such that
	\begin{equation}
		\label{2.2}
		\|u_n\|_{\2}^{2 \cdot \2} \leq C_4(1 + \varepsilon_n\|u_n\|_\gamma).
	\end{equation}
	Since $\2 >2$, using H\"older's inequality and \eqref{2.2}, we have
	\begin{align}
		\|u_n\|_2^2 \leq  |\Omega|^{\frac{\2-2}{\2}} \|u_n\|_{\2}^2 \leq C_4^{\frac{1}{\2}} |\Omega|^{\frac{\2-2}{\2}} (1 + \varepsilon_n\|u_n\|_\gamma)^{\frac{1}{\2}}.
		\nonumber
	\end{align}
	It follows that we can find a positive constant $C_5$ such that
	\begin{equation}
		\label{2.4}
		\|u_n\|_2^2 \leq C_5 (1 + \varepsilon_n\|u_n\|_\gamma)^{\frac{1}{\2}}.
	\end{equation}
	From \eqref{2.0}, \eqref{2.1} and \eqref{2.4}
	\begin{align}
		C_1 & \geq J_\gamma(u_n)\nonumber\\
		& = \frac{1}{2} \|u_n\|_\gamma^2 -\frac{\lambda}{2} \|u_n\|_2^2 -\frac{1}{2\cdot \2} \int_\Omega \left( \int_\Omega \frac{|u_n(w)|^{\2}}{d(z-w)^\mu} \ dw \right) |u_n(z)|^{\2} \ dz \nonumber\\
		& \geq \frac{1}{2} \|u_n\|_\gamma^2 -\frac{\lambda C_5}{2} (1 + \varepsilon_n\|u_n\|_\gamma)^{\frac{1}{\2}} - \frac{C_3}{2\cdot \2} (1+ \varepsilon_n \|u_n\|_\gamma)\nonumber.
	\end{align}
	Hence, the sequence $\{u_n\}$ is bounded in $ \mathcal{H}_\gamma(\Omega)$ which implies that there exists a function $u_0 \in \mathcal{H}_\gamma(\Omega)$ such that, up to a subsequence $u_n \rightharpoonup u_0$ in $\mathcal{H}_\gamma(\Omega)$ and
	$u_n \rightharpoonup u_0$ in $L^{2^*_\gamma}(\Omega)$. Then, we have
	\begin{align*} &|u_n|^{\2} \rightharpoonup |u_0|^{\2} \text{ in }  L^{\frac{2N_\gamma}{2N_\gamma-\mu}}(\Omega) \\
		&\text{ and } |u_n|^{\2-2}u_n \rightharpoonup |u_0|^{\2-2}u_0 \text{ in }  L^{\frac{2N_\gamma}{N_\gamma-\mu+2}}(\Omega) \end{align*}
	as $ n \to \infty $. By the Hardy-Littlewood-Sobolev inequality, $\int_\Omega \frac{|u_n(w)|^{\2}}{d(z-w)^\mu} \ dw$ defines a linear continuous map from $L^{\frac{2N_\gamma}{2N_\gamma-\mu}}(\Omega)$ to $L^{\frac{2N_\gamma}{\mu}}(\Omega)$. This implies that
	\[ \int_\Omega \frac{ |u_n(w)|^{\2}}{d(z-w)^\mu}\ dw  \rightharpoonup \int_\Omega \frac{ |u_0(w)|^{\2}}{d(z-w)^\mu}\ dw  \quad \text{in} \quad L^{\frac{2N_\gamma}{\mu}}(\Omega) \quad \text{as} \quad n \to \infty.
	\]
	Combining all these, we obtain
	\begin{align}
		& \left(\int_\Omega \frac{ |u_n(w)|^{\2}}{d(z-w)^\mu}\ dw \right) |u_n(z)|^{\2-2}u_n(z) \nonumber\\
		& \rightharpoonup \left(\int_\Omega \frac{ |u_0(w)|^{\2}}{d(z-w)^\mu}\ dw\right) |u_0(z)|^{\2-2}u_0(z) \quad \text{in} \quad L^{\frac{2N_\gamma}{N_\gamma+2}}(\Omega) \quad  \nonumber
	\end{align}
	as $n \to \infty$. Next, we know that $J_\gamma'(u_n) \to 0$ as $n \to \infty$. Thus, for any \( \varphi \in \mathcal{H}_\gamma(\Omega) \), we obtain
	\begin{align*}
		&\int_{\Omega} \nabla_\gamma u_0 \nabla_\gamma \varphi \ dz - \lambda \int_{\Omega} u_0 \varphi \ dz \\
		&- \int_{\Omega} \left(\int_{\Omega} \frac{|u_0(w)|^{\2} }{d(z-w)^\mu} \ dw \right) |u_0(z)|^{\2-2}u_0(z) \varphi(z) \ dz=0,
	\end{align*}
	which implies that $u_0$ is a weak solution of equation \eqref{a}.
	This completes the proof.
	\end{proof}
	
	Let $u_0$ be the weak solution as proved in the previous lemma, taking $\varphi = u_0 \in \mathcal{H}_\gamma(\Omega)$ as a test function in \eqref{a}, we get
	\[
	\int_{\Omega} |\nabla_\gamma u_0|^2 \ dz = \lambda \int_{\Omega} u_0^2 \ dz - \int_{\Omega} \left(\int_{\Omega} \frac{ |u_0(w)|^{\2}}{d(z-w)^\mu} \ dw \right) |u_0(z)|^{\2} \ dz.
	\] This implies that
	\[
	J_\gamma(u_0) = \frac{N_\gamma+ 2 - \mu }{4N_\gamma - 2\mu} \int_{\Omega} \int_{\Omega} \frac{|u_0(z)|^{\2} |u_0(w)|^{\2}}{d(z-w)^\mu} \, dz \, dw \geq 0.
	\]
	\begin{lem}
		\label{lem2.4}
		Let $ N \geq 3$, $ 0 < \mu < N_\gamma $, and $\lambda > 0$. If $\{u_n\}$ is a Palais-Smale sequence of $J_\gamma$ at level $c$ with
		\[
		c < \frac{N_\gamma + 2 - \mu}{4N_\gamma - 2\mu } S^{\frac{2N_\gamma - \mu}{N_\gamma + 2 - \mu}}_{\Omega},
		\]
		then the functional $J_\gamma$ satisfies the Palais-Smale condition at level $c$.
	\end{lem}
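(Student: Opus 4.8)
The plan is to combine the boundedness and weak-limit information from Lemma \ref{lem2.3} with a Br\'ezis--Lieb type splitting of all three terms of $J_\gamma$, and then use the definition of $S_\Omega$ together with the subcritical bound on $c$ to force the remainder to vanish. First I would invoke Lemma \ref{lem2.3}: the Palais--Smale sequence $\{u_n\}$ is bounded, so up to a subsequence $u_n \rightharpoonup u_0$ in $\mathcal{H}_\gamma(\Omega)$, $u_n \to u_0$ strongly in $L^2(\Omega)$ by the compact embedding, and $u_n \to u_0$ a.e.; moreover $u_0$ is a weak solution, whence $\langle J_\gamma'(u_0), u_0 \rangle_\gamma = 0$ and $J_\gamma(u_0) \geq 0$ by the identity established just before this lemma. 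Setting $v_n := u_n - u_0$, so that $v_n \rightharpoonup 0$ and $v_n \to 0$ in $L^2(\Omega)$, the goal reduces to proving $\|v_n\|_\gamma \to 0$.

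Next I would split each term. By weak convergence, $\|u_n\|_\gamma^2 = \|v_n\|_\gamma^2 + \|u_0\|_\gamma^2 + o(1)$; by strong $L^2$-convergence, $\|u_n\|_2^2 = \|u_0\|_2^2 + o(1)$; and by Lemma \ref{lem2.1} applied to the bounded $L^{2^*_\gamma}$-sequence $\{u_n\}$, the nonlocal term splits as $B(u_n) = B(v_n) + B(u_0) + o(1)$, where I abbreviate $B(u) = \int_\Omega\int_\Omega \frac{|u(z)|^{\2}|u(w)|^{\2}}{d(z-w)^\mu}\,dz\,dw$. Substituting these into $J_\gamma(u_n)$ and into $\langle J_\gamma'(u_n), u_n \rangle_\gamma = \|u_n\|_\gamma^2 - \lambda\|u_n\|_2^2 - B(u_n)$, and using $\langle J_\gamma'(u_0), u_0 \rangle_\gamma = 0$ to cancel the $u_0$-contributions, I obtain the two asymptotic relations
\[
\tfrac{1}{2}\|v_n\|_\gamma^2 - \tfrac{1}{2\cdot\2}B(v_n) \to c - J_\gamma(u_0), \qquad \|v_n\|_\gamma^2 - B(v_n) \to 0,
\]
the second coming from $\langle J_\gamma'(u_n), u_n \rangle_\gamma \to 0$.

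From the second relation, passing to a further subsequence I may write $\|v_n\|_\gamma^2 \to \ell$ and $B(v_n) \to \ell$ with the same limit $\ell \geq 0$. The definition \eqref{1.5} of $S_\Omega$ gives $\|v_n\|_\gamma^2 \geq S_\Omega\, B(v_n)^{1/\2}$, which in the limit yields $\ell \geq S_\Omega\, \ell^{1/\2}$; hence either $\ell = 0$ or $\ell \geq S_\Omega^{\2/(\2-1)} = S_\Omega^{(2N_\gamma-\mu)/(N_\gamma+2-\mu)}$. Meanwhile the first relation gives $c - J_\gamma(u_0) = \left(\tfrac{1}{2} - \tfrac{1}{2\cdot\2}\right)\ell = \tfrac{N_\gamma+2-\mu}{4N_\gamma-2\mu}\,\ell$ after simplifying the exponent $\2 = \frac{2N_\gamma-\mu}{N_\gamma-2}$. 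If $\ell > 0$, then since $J_\gamma(u_0) \geq 0$ I would get $c \geq \tfrac{N_\gamma+2-\mu}{4N_\gamma-2\mu}\,S_\Omega^{(2N_\gamma-\mu)/(N_\gamma+2-\mu)}$, contradicting the hypothesis on $c$. Therefore $\ell = 0$, i.e. $\|v_n\|_\gamma \to 0$ and $u_n \to u_0$ strongly in $\mathcal{H}_\gamma(\Omega)$, which is exactly the Palais--Smale condition at level $c$.

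I expect the only genuine obstacle to be the nonlocal Br\'ezis--Lieb splitting $B(u_n) = B(v_n) + B(u_0) + o(1)$, but this is precisely the content of Lemma \ref{lem2.1}; the remaining work is the bookkeeping of the $o(1)$ terms and the algebraic identification of the coefficient $\frac{N_\gamma+2-\mu}{4N_\gamma-2\mu}$ and the exponent $\frac{2N_\gamma-\mu}{N_\gamma+2-\mu}$, which match the stated critical threshold precisely.
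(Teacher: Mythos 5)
Your proposal is correct and follows essentially the same route as the paper's proof: decompose via $v_n = u_n - u_0$ using Br\'ezis--Lieb for the gradient term, Lemma \ref{lem2.1} for the nonlocal term, and strong $L^2$ convergence; extract the common limit $\ell$ of $\|v_n\|_\gamma^2$ and $B(v_n)$ from $\langle J_\gamma'(u_n),u_n\rangle_\gamma \to 0$; and rule out $\ell \geq S_\Omega^{(2N_\gamma-\mu)/(N_\gamma+2-\mu)}$ using $J_\gamma(u_0)\geq 0$ and the hypothesis on $c$. The only cosmetic difference is that the paper discards $J_\gamma(u_0)$ as an inequality one step earlier, while you carry $c - J_\gamma(u_0)$ to the end; the algebra of the coefficient and exponent matches the paper exactly.
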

	
	\begin{proof}
	From Lemma \ref{lem2.3}, there exists $u_0$ in $\mathcal{H}_\gamma(\Omega)$ such that $u_n \rightharpoonup u_0$ in $\mathcal{H}_\gamma(\Omega)$, and let us define $v_n := u_n - u_0$. It follows that $v_n \rightharpoonup 0$ in $\mathcal{H}_\gamma(\Omega)$ and $v_n \to 0$ almost everywhere in $\Omega$. Additionally, by the Brezis-Lieb lemma \cite{Brezis.Lieb}, it is easy to see that
	\[
	\int_{\Omega} |\nabla_\gamma u_n|^2 \, dz = \int_{\Omega} |\nabla_\gamma v_n|^2 \, dz + \int_{\Omega} |\nabla_\gamma u_0|^2 \, dz + o_n(1).
	\]
	From Lemma \ref{lem2.1}, we have
	\begin{align}
		&\int_{\Omega} \left(\int_{\Omega} \frac{ |u_n(w)|^{\2}}{d(z-w)^\mu} \ dw \right) |u_n(z)|^{\2} \ dz \nonumber\\
		&= \int_{\Omega} \left(\int_{\Omega} \frac{ |v_n(w)|^{\2}}{d(z-w)^\mu} \ dw \right) |v_n(z)|^{\2} \ dz \nonumber\\
		& \quad+ \int_{\Omega} \left(\int_{\Omega} \frac{ |u_0(w)|^{\2}}{d(z-w)^\mu} \ dw \right)|u_0(z)|^{\2} \ dz + o_n(1).\nonumber
	\end{align}
	Since $J_\gamma(u_n) \to c$ as $n \to \infty$ and by the above equations, we have
	\begin{align}
		c + o_n(1)&=  J_\gamma(u_n) + o_n(1)\nonumber\\
		&= \frac{1}{2} \int_{\Omega} |\nabla_\gamma u_n|^2 \ dz - \frac{\lambda}{2} \int_{\Omega} u_n^2 \ dz \nonumber\\
		& \quad- \frac{1}{2 \cdot \2} \int_{\Omega} \left( \int_{\Omega} \frac{ |u_n(w)|^{\2} }{d(z-w)^\mu} \ dw \right) |u_n(z)|^{\2} \ dz + o_n(1)\nonumber\\
		&= \frac{1}{2} \int_{\Omega} |\nabla_\gamma v_n|^2 \ dz + \frac{1}{2} \int_{\Omega} |\nabla_\gamma u_0|^2 \ dz \nonumber\\
		&\quad - \frac{1}{2 \cdot \2} \int_{\Omega} \left(\int_{\Omega} \frac{ |v_n(w)|^{\2}}{d(z-w)^\mu}  \ dw \right) |v_n(z)|^{\2} \ dz
		\nonumber\\
		&\quad \quad- \frac{\lambda}{2} \int_{\Omega} v_n^2  dz -\frac{\lambda}{2} \int_{\Omega} u_0^2  dz\nonumber\\
		& \quad \quad- \frac{1}{2 \cdot \2} \int_{\Omega} \left( \int_{\Omega} \frac{ |u_0(w)|^{\2}}{d(z-w)^\mu} dw \right) |u_0(z)|^{\2} dz + o_n(1)\nonumber.
	\end{align}
	Therefore,
	\begin{align}
		c + o_n(1)
		&= J_\gamma(u_0) + \frac{1}{2} \int_{\Omega} |\nabla_\gamma v_n|^2  dz - \frac{\lambda}{2} \int_{\Omega} v_n^2 dz \nonumber\\
		&- \frac{1}{2 \cdot \2} \int_{\Omega}\left( \int_{\Omega} \frac{ |v_n(w)|^{\2}}{d(z-w)^\mu} dw\right)|v_n(z)|^{\2} dz + o_n(1).\nonumber
	\end{align}
	By the compact embedding $\mathcal{H}_\gamma(\Omega) \hookrightarrow L^2(\Omega)$, we have $\int_{\Omega} v_n^2 \ dz \to 0$ as $n \to \infty$ and since $ J_\gamma(u_0) \geq 0$, therefore
	\begin{align}
		c + o_n(1)
		&\geq \frac{1}{2} \int_{\Omega} |\nabla_\gamma v_n|^2 \, dz - \frac{1}{2 \cdot \2} \int_{\Omega} \left( \int_{\Omega} \frac{ |v_n(w)|^{\2}}{d(z-w)^\mu} \ dw \right) |v_n(z)|^{\2} \ dz \nonumber\\
		& \quad + o_n(1). \label{2.3}
	\end{align}
	As $u_0$ is a weak solution this implies that $\langle J'_\gamma(u_0), u_0 \rangle_\gamma = 0$ with $\int_{\Omega} v_n^2 \ dz \to 0$ as $n \to \infty$, we have
	\begin{align}
		o_n(1) &= \langle J'_\gamma(u_n), u_n \rangle_\gamma\nonumber\\
		& = \int_{\Omega} |\nabla_\gamma u_n|^2 \ dz - \lambda \int_{\Omega} u_n^2 \ dz - \int_{\Omega} \left(\int_{\Omega} \frac{ |u_n(w)|^{\2}}{d(z-w)^\mu} \ dw \right) |u_n(z)|^{\2} \ dz \nonumber\\
		&= \int_{\Omega} |\nabla_\gamma v_n|^2 \ dz + \int_{\Omega} |\nabla_\gamma u_0|^2 \ dz - \lambda \int_{\Omega} u_0^2 \ dz\nonumber\\
		& \quad - \int_{\Omega} \left(\int_{\Omega} \frac{ |v_n(w)|^{\2} }{d(z-w)^\mu} \ dw\right) |v_n(z)|^{\2} \ dz\nonumber\\
		&\quad- \int_{\Omega} \left(\int_{\Omega} \frac{ |u_0(w)|^{\2}}{d(z-w)^\mu} \ dw \right) |u_0(z)|^{\2} \ dz + o_n(1)\nonumber\\
		&= \langle J'_\gamma(u_0), u_0 \rangle_\gamma + \int_{\Omega} |\nabla_\gamma v_n|^2 \ dz\nonumber\\
		&\quad - \int_{\Omega}\left( \int_{\Omega} \frac{ |v_n(w)|^{\2}}{d(z-w)^\mu} \ dw \right)|v_n(z)|^{\2} \ dz + o_n(1)\nonumber\\
		&= \int_{\Omega} |\nabla_\gamma v_n|^2 \, dz - \int_{\Omega}\left( \int_{\Omega} \frac{ |v_n(w)|^{\2}}{d(z-w)^\mu} \ dw \right) |v_n(z)|^{\2} \ dz + o_n(1). \nonumber
	\end{align}
	This implies that there must exist a non-negative constant $b$ such that
	\[
	\int_{\Omega} |\nabla_\gamma v_n|^2 \, dz \to b \quad \text{and} \quad \int_{\Omega}\left( \int_{\Omega} \frac{ |v_n(w)|^{\2}}{d(z-w)^\mu} \ dw \right) |v_n(z)|^{\2} \ dz \to b,
	\]
	as  $n \to \infty$. Thus, from \eqref{2.3}, we get
	\begin{equation}
		c \geq \frac{N_\gamma + 2- \mu}{4N_\gamma - 2\mu} b. \label{2.5}
	\end{equation}
	Recall the definition of the best constant $S_{\Omega}$ in \eqref{1.5}, we have
	\[
	S_{\Omega} \left( \int_{\Omega} \left( \int_{\Omega} \frac{ |v_n(w)|^{\2}}{d(z-w)^\mu} \ dw \right)|v_n(z)|^{\2} \ dz \right)^{\frac{ N_\gamma - 2}{2N_\gamma - \mu}} \leq \int_{\Omega} |\nabla_\gamma v_n|^2 \, dz,
	\]
	which gives that
	\[
	b \geq S_{\Omega} b^{\frac{N_\gamma - 2}{2N_\gamma - \mu}}.
	\]
	From this we conclude that $$ \textrm{ either } b = 0\ \ \textrm{ or}\ \  b \geq S_{\Omega}^{ \frac{2N_\gamma - \mu }{N_\gamma -\mu + 2}}. $$ If $ b = 0 $, the proof is completed. If $$b \geq S_{\Omega}^{ \frac{2N_\gamma - \mu }{N_\gamma -\mu + 2}},  $$ then from \eqref{2.5}, we deduce that
	\[
	\frac{N_\gamma - \mu +2}{4 N_\gamma- 2\mu} S_{\Omega}^{ \frac{2N_\gamma - \mu}{N_\gamma -\mu + 2}}  \leq \frac{ N_\gamma - \mu+2}{4N_\gamma- 2\mu} b \leq c,
	\]
	which contradicts the hypothesis that
	\[
	c < \frac{N_\gamma - \mu +2}{4N_\gamma- 2\mu} S_{\Omega}^{ \frac{2N_\gamma - \mu}{N_\gamma -\mu + 2}}.
	\]
	Hence, $ b = 0 $ which further implies that $\| u_n - u_0 \|_\gamma \to 0  \text{ as }  n \to \infty$.
	This concludes the proof.
	\end{proof}
	
	\section{Proof of Theorem \ref{m}}
	\setcounter{section}{4} \setcounter{equation}{0}
	\label{sec3}
	We are now prepared to start the proof of Theorem \ref{m}. The following lemma is very crucial proving the main result. Before starting, we fix some notations. Since $\lambda^* = \min\{\lambda_k : \lambda < \lambda_k\}$ defined in Theorem \ref{m}, we assume that $\lambda^* = \lambda_k$ for some $k \in \mathbb{N}$. It is known that $\lambda^*$ has multiplicity $m \in \mathbb{N}$ by assumption. Let $l \geq 1$ denote the multiplicity of the first eigenvalue $\lambda_1$. Therefore, we can write
	\[     \lambda^* = \lambda_1 = \cdots = \lambda_m < \lambda_{m+1} , \ \text{ if } k=1    \]
	\[       \text{and } \  \lambda_{l+ k-1} < \lambda^* = \lambda_{l+k} = \cdots = \lambda_{l+k+m-1}< \lambda_{l+k+m}, \ \text{ if } \ k\geq 2. \]
	With the notation from Theorem \ref{m}, we define
	\begin{equation*}
		W:= \left\{
		\begin{aligned}
			& \text{span}\{\phi_1, \dots, \phi_{m}\}  \quad  &&\text{if} \ k=1,\\
			& \text{span}\{\phi_1, \dots, \phi_{l+k+m-1} \} \quad &&\text{if} \ k\geq 2
		\end{aligned}
		\right.
	\end{equation*}
	\begin{equation*}
		\text{and } \ V := \left\{
		\begin{aligned}
			&  \mathcal{H}_\gamma(\Omega) \quad  &&\text{if} \ k=1,\\
			& \left\{ u \in \mathcal{H}_\gamma(\Omega) : \langle u, \phi_j \rangle_\gamma = 0 \ \forall j = 1, \cdots, l +k-1 \right\} \quad &&\text{if} \ k\geq 2.
		\end{aligned}
		\right.
	\end{equation*}
	{%
		Note that both $W$ and $V$ are closed subsets of $ \mathcal{H}_\gamma(\Omega)$, with
		\[
		\dim W = l + k + m - 1 \quad \text{and} \quad \text{codim} \ V = l +k - 1, 
		\] }
	hence condition (iii)(c) of Theorem \ref{thm1} is satisfied. In the next lemma, we verify the remaining conditions of Theorem \ref{thm1}.
	
	\begin{lem}\label{lem3.1}
		Assume that $\lambda^*$ is defined as Theorem \ref{m} with multiplicity $m\in \mathbb{N}$. Then, there exist positive constants $\beta',\delta$ and $\rho$ such that
		\begin{enumerate}
			\item $J_\gamma(u) \leq \beta'$, for all $u\in W$;
			\item $J_\gamma(u) \geq \delta$, for any $u\in V$ with $\|u\|_\gamma =\rho$;
		\end{enumerate}
		where $V$ and $W$ are given as above.
	\end{lem}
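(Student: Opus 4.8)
The plan is to verify the two geometric conditions separately, exploiting the spectral decomposition of $\mathcal{H}_\gamma(\Omega)$ furnished by the eigenfunctions $\{\phi_j\}$, together with the fact that the Choquard term is positive and homogeneous of degree $2\cdot\2$ (hence super-quadratic) and the estimate encoded in the best constant $S_\Omega$ of \eqref{1.5}. Throughout I use the relations $\langle \phi_i,\phi_j\rangle_\gamma = \lambda_i\delta_{ij}$ and $\int_\Omega \phi_i\phi_j\,dz = \delta_{ij}$.

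For item (1) I would work on the finite-dimensional space $W=\mathrm{span}\{\phi_1,\dots,\phi_{k+m-1}\}$. Writing $u=\sum_{i=1}^{k+m-1}c_i\phi_i$, the quadratic part of $J_\gamma$ becomes $\tfrac12\sum_i c_i^2(\lambda_i-\lambda)$. The key point is that, by the definition $\lambda^*=\min\{\lambda_k:\lambda<\lambda_k\}$, every index $i\le k-1$ satisfies $\lambda_i\le\lambda$ while $\lambda_k=\cdots=\lambda_{k+m-1}=\lambda^*$; hence the indices $i\le k-1$ contribute non-positively and $\tfrac12\|u\|_\gamma^2-\tfrac{\lambda}{2}\|u\|_2^2\le\tfrac12(\lambda^*-\lambda)\|u\|_2^2$. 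Since the Choquard functional is continuous, positive on $W\setminus\{0\}$, and homogeneous of degree $2\cdot\2$, its minimum over the unit sphere $\{u\in W:\|u\|_\gamma=1\}$ is a strictly positive constant; combined with the equivalence of norms on the finite-dimensional $W$ this yields a bound of the form $J_\gamma(u)\le C_1\|u\|_\gamma^2-C_2\|u\|_\gamma^{2\cdot\2}$. As $2\cdot\2>2$, the right-hand side is bounded above over all of $W$, and I take $\beta'$ to be its supremum. I would also record the explicit bound obtained by replacing the Choquard term using $d(z-w)\le\diam\Omega$, i.e. $\ge (\diam\Omega)^{-\mu}(\int_\Omega|u|^{\2}\,dz)^2$, together with the Hölder estimate $\int_\Omega|u|^{\2}\,dz\ge\|u\|_2^{\2}|\Omega|^{-(\2-2)/2}$, and then maximizing the resulting one-variable function of $\|u\|_2^2$; this is the shape of $\beta'$ to be matched against the threshold of Lemma~\ref{lem2.4}.

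For item (2) I would use the variational characterization of the eigenvalues: for $u\in V$ one has $\|u\|_\gamma^2\ge\lambda^*\|u\|_2^2$ (this is the estimate through $\lambda_1$ when $k=1$ and through $\mathcal{H}_{k-1}^{\perp}$ when $k\ge2$). Therefore $\tfrac12\|u\|_\gamma^2-\tfrac{\lambda}{2}\|u\|_2^2\ge\tfrac12\bigl(1-\tfrac{\lambda}{\lambda^*}\bigr)\|u\|_\gamma^2$, where the coefficient $1-\lambda/\lambda^*$ is strictly positive precisely because $\lambda<\lambda^*$. Bounding the Choquard term from above by the definition of $S_\Omega$ as $S_\Omega^{-\2}\|u\|_\gamma^{2\cdot\2}$, I obtain, for $\|u\|_\gamma=\rho$, the estimate $J_\gamma(u)\ge\tfrac12\bigl(1-\tfrac{\lambda}{\lambda^*}\bigr)\rho^2-\tfrac{1}{2\cdot\2}S_\Omega^{-\2}\rho^{2\cdot\2}$. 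Since the leading term is quadratic in $\rho$ and the correction is of higher order, the right-hand side is strictly positive for $\rho$ small enough; I fix such a $\rho$ and set $\delta$ equal to this positive value.

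The two one-variable optimizations are routine. The main obstacle I anticipate is item (1): the quadratic form $\tfrac12\|u\|_\gamma^2-\tfrac{\lambda}{2}\|u\|_2^2$ is non-positive only on $\mathrm{span}\{\phi_i:i\le k-1\}$ and is genuinely positive, of size $(\lambda^*-\lambda)\|u\|_2^2$, on the eigenspace of $\lambda^*$, so the boundedness of $J_\gamma$ from above on $W$ rests entirely on the negative super-critical Choquard term dominating at large norm; making this uniform requires the finite-dimensionality of $W$ and the strict inequality $2\cdot\2>2$. Matching the resulting $\beta'$ against the compactness level $\tfrac{N_\gamma+2-\mu}{4N_\gamma-2\mu}S_\Omega^{(2N_\gamma-\mu)/(N_\gamma+2-\mu)}$ of Lemma~\ref{lem2.4} is where the standing hypothesis $\lambda^*-\lambda<S_\Omega\bigl/\bigl(|\Omega|^{(\2-2)/\2}(\diam\Omega)^{\mu/\2}\bigr)$ enters, and keeping all the exponents consistent there is the most error-prone step.
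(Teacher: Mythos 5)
Your proposal is correct and takes essentially the same route as the paper: the spectral estimate $\|u\|_\gamma^2\le\lambda^*\|u\|_2^2$ on $W$ followed by the $d(z-w)\le\diam\Omega$ and H\"older bounds and a one-variable maximization to produce $\beta'$, and on $V$ the eigenvalue characterization $\|u\|_\gamma^2\ge\lambda^*\|u\|_2^2$ plus the $S_\Omega$ bound and a small choice of $\rho$ to produce $\delta>0$. Your extra soft compactness argument on the unit sphere of $W$ is an equivalent (non-quantitative) variant of the paper's explicit computation, and your exponent $S_\Omega^{-\2}$ in part (2) is in fact the correct consequence of \eqref{1.5} (the paper writes $S_\Omega^{-2\cdot\2}$, an immaterial constant-level slip).
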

	\begin{proof}
	(1) Let $u \in W$, and let $\lambda^*= \lambda_{l+k}, k \in \mathbb{N}$, we can write
	\begin{equation*}
		u(z)= \sum_{j=1}^{l+k+m-1} u_j \phi_j(z)
	\end{equation*}
	with $u_j\in \R, j= 1,2, \cdots, l +k+m-1$.
	
	Since $\{ \phi_1, \phi_2, \cdots\}$ is an orthonormal basis of $L^2(\Omega)$ and an orthogonal basis of $\mathcal{H}_\gamma(\Omega)$,  we have
	\begin{equation*}
		\|u\|^2_\gamma = \sum_{j=1}^{l+k+m-1} u_j^2 \|\phi_j\|^2_\gamma = \sum_{j=1}^{l+k+m-1} \lambda_j u_j^2 \leq \lambda_{l+k} \sum_{j=1}^{l+k+m-1} u_j^2= \lambda_{l+k} \|u\|^2_2 = \lambda^* \|u\|^2_2.
	\end{equation*}
	In a similar mannar one can verify the above relation for $\lambda^* = \lambda_1$.
	Let $\diam \Omega$ denotes the diameter of the domain $\Omega$ with respect to the Grushin norm $d(\cdot)$, then for $u\in W$
	\begin{align}
		J_\gamma(u) & = \frac{1}{2} \|u\|^2_\gamma-\frac{\lambda}{2} \|u\|^2_2 - \frac{1}{2\cdot\2} \int_\Omega \left( \int_\Omega \frac{|u(w)|^{\2}}{d(z-w)^\mu}dw\right)|u(z)|^{\2}dz \nonumber\\
		& \leq \frac{1}{2}(\lambda^* - \lambda) \|u\|_2^2 - \frac{1}{2\cdot\2} \int_\Omega \left( \int_\Omega \frac{|u(w)|^{\2}}{d(z-w)^\mu}dw\right)|u(z)|^{\2}dz \nonumber\\
		& \leq \frac{1}{2}(\lambda^* - \lambda) \|u\|_2^2 - \frac{1}{2(\diam \Omega)^\mu\2} \int_\Omega \left( \int_\Omega |u(w)|^{\2}dw\right)|u(z)|^{\2} dz \nonumber\\
		& \leq \frac{1}{2}(\lambda^* - \lambda) \|u\|_2^2 - \frac{1}{2 (\diam \Omega)^\mu \2} \left(\int_\Omega|u(z)|^{\2} dz \right)^2 \nonumber\\
		& \leq \frac{1}{2}(\lambda^* - \lambda) |\Omega|^{\frac{\2-2}{\2}} \|u\|_{\2}^2 - \frac{1}{2 (\diam \Omega)^\mu \2} \|u\|_{\2}^{2\cdot \2}. \label{3.2}
	\end{align}
	Let $$g(\tau) = \frac{1}{2}(\lambda^* - \lambda)|\Omega|^{\frac{\2-2}{\2}} \tau^2 - \frac{1}{2(\diam \Omega)^\mu\2} \tau^{2\cdot \2},\ \ \textrm{for}\ \ \tau \geq 0.$$
	Then $g(\tau)$ has a maximum at $$\tau_{\max} = \Big(  (\lambda^*-\lambda)|\Omega|^{\frac{\2-2}{\2}}(\diam \Omega)^\mu\Big)^{\frac{1}{2(\2-1)}}.$$
	Therefore, by simple computations, we have
	\begin{align*}
		g(\tau_{max})= \1 \left(1-\frac{1}{\2}\right)\left[  (\lambda^*-\lambda)^{\2} |\Omega|^{\2-2}(\diam \Omega)^{\mu}\right]^{\frac{1}{\2-1}}.
	\end{align*}
	Hence, we choose $$\beta' = \1 \left(1-\frac{1}{\2}\right)\left[  (\lambda^*-\lambda)^{\2} |\Omega|^{\2-2}(\diam \Omega)^{\mu}\right]^{\frac{1}{\2-1}}.$$ Then, from \eqref{3.2} we have
	\begin{equation*}
		J_\gamma(u) \leq \beta' \text{ for all } u \in W.
	\end{equation*}
	
	(2) For every $u \in V$, it follows from the variational characterization of $ \lambda^* = \lambda_{l+k}$ given by
	\[
	\lambda_{l+k} = \min_{u \in V \setminus \{ 0 \}} \frac{\| u \|^2_\gamma}{\| u \|^2_2},
	\] the following inequality holds
	\[
	\| u \|^2_\gamma \geq \lambda^* \| u \|^2_2.
	\] 
	By the characterization of $\lambda_1$, this inequality holds for $\lambda= \lambda_1$ case also.
	Therefore, by the definition of $S_{\Omega}$ and noting that $\lambda > 0$, we obtain
	\begin{align}
		J_\gamma(u) &\geq \frac{1}{2} \left( 1 - \frac{\lambda}{\lambda^*} \right) \| u \|^2_\gamma - \frac{1}{2\cdot \2} \int_\Omega \left( \int_\Omega \frac{|u(w)|^{\2}}{d(z-w)^\mu}dw\right)|u(z)|^{\2}dz\nonumber\\
		& \geq \frac{1}{2} \left( 1 - \frac{\lambda}{\lambda^*} \right) \| u \|^2_\gamma - \frac{1}{2\cdot \2 S_{\Omega}^{2\cdot \2}} \|u\|_\gamma^{2\cdot\2}\nonumber\\
		& = \|u\|_\gamma^2 \left( \frac{1}{2} \left( 1 - \frac{\lambda}{\lambda^*} \right) - \frac{1}{2\cdot \2 S_{\Omega}^{2\cdot \2}} \|u\|_\gamma^{2\cdot\2-2}\right) \nonumber
	\end{align}

	Now, let $u \in V$ such that $\| u \|_\gamma = \rho > 0$. Since $2\cdot \2 > 2$, we can choose $\rho$ sufficiently small, say $\rho \leq \overline{\rho}$ with $\overline{\rho}>0$ such that
	\[
	\frac{1}{2} \left( 1 - \frac{\lambda}{\lambda^*} \right) - \frac{1}{2\cdot \2 S_{\Omega}^{2\cdot \2}} \| u \|^{\2-2}_\gamma > 0,
	\]
	and
	\begin{align}
		\rho^2 \left( \frac{1}{2} \left( 1 - \frac{\lambda}{\lambda^*} \right) -  \frac{1}{2\cdot \2 S_{\Omega}^{2\cdot \2}}\rho^{\2-2} \right) < \rho^2 \left( \frac{1}{2} \left( 1 - \frac{\lambda}{\lambda^*} \right) \right) < \beta'.\nonumber
	\end{align}
	\end{proof}
	
	\subsection{Proof of Theorem \ref{m}} By Lemma \ref{lem2.4}, the functional $ J_{ \gamma}$ satisfies compactness assumption (iii) of Theorem \ref{thm1} with
	\[ \beta =  \frac{N_\gamma - \mu +2}{4 N_\gamma- 2\mu} S_{\Omega}^{ \frac{2N_\gamma - \mu}{N_\gamma -\mu + 2}}.\]
	
	From the Lemma \ref{lem3.1}, $J_\gamma$ verifies geometric properties required by the assumption (iii)-(a) and (b) of Theorem \ref{thm1} with
	\begin{align*} \rho &= \overline{\rho}, \\
		\beta' &= \1 \left(1-\frac{1}{\2}\right)\left[  (\lambda^*-\lambda)^{\2} |\Omega|^{\2-2}(\diam \Omega)^{\mu}\right]^{\frac{1}{\2-1}} \\
		\text{ and } \delta &= \rho^2 \left( \frac{1}{2} \left( 1 - \frac{\lambda}{\lambda^*} \right) -  \frac{1}{2\cdot \2 S_{\Omega}^{2\cdot \2}}\rho^{\2-2} \right).
	\end{align*}
	By our assumption $$\lambda^* -\lambda <  \frac{S_{\Omega}}{|\Omega|^{\frac{\2-2}{\2}}(\diam \Omega)^{\frac{\mu}{\2}}}, $$
	it is easy to show that
	\begin{equation*}
		\1 \left(1-\frac{1}{\2}\right)\left[  (\lambda^*-\lambda)^{\2} |\Omega|^{\2-2}(\diam \Omega)^{\mu}\right]^{\frac{1}{\2-1}} < \frac{N_\gamma - \mu +2}{4N_\gamma- 2\mu} S_{\Omega}^{ \frac{2N_\gamma - \mu}{N_\gamma -\mu + 2}}.
	\end{equation*}
	Therefore, assumption (iii) of Theorem \ref{thm1} holds with $\delta < \beta'<\beta$. We conclude that $J_\gamma$ has $m$ pairs of critical points $\{ -u_{\gamma,i}, u_{\gamma,i}\}$ with critical values in $[\delta, \beta']$. These critical points are nontrivial as $J_\gamma(0)=0$ and
	\begin{equation}
		\label{4.1}
		0< \delta \leq J_\gamma(\pm u_{\gamma,i})\leq \beta'.
	\end{equation}
	Fix an index $i \in \{1, \ldots, m\}$. Using equation \eqref{4.1}, we have
	\begin{align}
		\label{4.2}
		\beta' &\geq J_\gamma(u_{\gamma,i}) \nonumber\\
		&= J_{\gamma}(u_{\gamma,i}) - \frac{1}{2} \langle J'_\gamma(u_{\gamma,i}), u_{\gamma,i} \rangle_\gamma \nonumber \\
		&= \left( \frac{1}{2} - \frac{1}{2\cdot \2} \right) \int_\Omega \left( \int_\Omega \frac{|u_{\gamma,i}(z)|^{\2} }{d(z-w)^\mu} \ dw\right) |u_{\gamma,i}(z)|^{\2} \ dz \\
		&\geq \left(\frac{1}{2}- \frac{1}{2\cdot \2} \right) \frac{1}{(\diam \Omega)^\mu} \|u_{\gamma,i}\|_{\2}^{2\cdot \2} \nonumber
	\end{align}
	Taking the limit as $\lambda \to \lambda^*$, we observe that $\beta' \to 0$. Hence, from the above inequality, it follows that
	\begin{equation}
		\label{4.3}
		\|u_{\gamma,i}\|_{\2} \to 0 \quad \text{as } \lambda \to \lambda^*.
	\end{equation}
	Since the embedding $L^{\2}(\Omega) \hookrightarrow L^2(\Omega)$ is continuous due to the boundedness of $\Omega$ and $\2 >2$, then by \eqref{4.3}, we have
	\begin{equation}
		\label{4.4}
		\|u_{\gamma,i}\|_2 \to 0 \quad \text{as } \lambda \to \lambda^*.
	\end{equation}
	Now, again from \eqref{4.1}, we can write
	\begin{align*}
		\beta' \geq  J_{\gamma}(u_{\gamma,i}) &= \frac{1}{2} \|u_{\gamma,i}\|_\gamma - \frac{\lambda}{2} \|u_{\gamma,i}\|_2 \\
		& - \frac{1}{2 \cdot \2}\int_\Omega \left( \int_\Omega \frac{|u_{\gamma,i}(z)|^{\2} }{d(z-w)^\mu} \ dw\right) |u_{\gamma,i}(z)|^{\2} \ dz.
	\end{align*}
	
	Using \eqref{4.2} and \eqref{4.4}, we conclude that
	\[
	\|u_{\gamma,i}\|_{\gamma} \to 0 \quad \text{as } \lambda \to \lambda^*.
	\]
	This completes the proof of Theorem \ref{m}.

	\section*{acknowledgements}
		Suman Kanungo acknowledges the financial aid from CSIR, Govt. of India, File No. 09/1237(15789)/2022-EMR-I. Pawan Kumar Mishra is supported by the Science and Engineering Research Board, Government of India, Grant No. MTR/2022/000495. This work has been funded by the European Union - NextGenerationEU within the framework of PNRR  Mission 4 - Component 2 - Investment 1.1 under the Italian Ministry of University and Research (MUR) program PRIN 2022 - grant number 2022BCFHN2 - Advanced theoretical aspects in PDEs and their applications - CUP: H53D23001960006.

\end{document}